\documentclass{article}
\usepackage{fullpage}

\usepackage[a4paper, total={5.5in, 8.5in}]{geometry}

\usepackage{amsmath,amsthm,amssymb}
\usepackage{graphicx}
\usepackage{scrextend}
\usepackage[usenames,dvipsnames]{xcolor}
\usepackage{lineno}
\graphicspath{ {images/} }
\usepackage{hyperref} 
\usepackage{thm-restate}
\usepackage{enumitem}

\newcommand{\CN}{\mathrm{CN}}
\newcommand{\sd}{\,\triangle\,}

\newcommand{\aut}{\operatorname{Aut}}

\newtheorem{theorem}{Theorem}

\newtheorem{lemma}[theorem]{Lemma}

\theoremstyle{definition}
\newtheorem{definition}[theorem]{Definition}

\newtheorem{remark}[theorem]{Remark}

\begin{document}

\title{Automorphisms of Token Graphs That Send $4$-Cycles Generated by Two Edges to Cycles Generated by a $4$-Cycle and Two Tokens}
\author{Ruy Fabila-Monroy \thanks{Departamento de Matem\'aticas, CINVESTAV.} 
\footnote{\tt{ruyfabila@math.cinvestav.edu.mx}}    
\and Sergio Gerardo G\'omez-Galicia\footnotemark[1] \footnote{\texttt{sgomez@math.cinvestav.mx}} 
\and Ana Laura Trujillo-Negrete\thanks{Instituto de Matem\'aticas, UNAM, Mexico.
\texttt{atrujillo@im.unam.mx}}}
\maketitle
\begin{abstract}
Let $G$ be a connected graph. The $k$-token graph of $G$ is the graph $F_k(G)$ whose vertex set consists of all subsets of $k$
vertices of $G$, where two of them are adjacent whenever their symmetric difference is an edge of $G$.
Every automorphism of $G$ induces one of $F_k(G)$, as does
complementation when $k=|G|/2$; automorphisms of this form are called
\emph{induced}. Fabila-Monroy et al.\ (Graphs and Combinatorics 42, 2026) show
that token graphs can have many non-induced automorphisms, arising from
\emph{twin cuts}. These are cut sets $\{x,y\}$
whose two vertices have the same neighbours (other than themselves) in $G$.
These non-induced automorphisms send configurations with a prescribed
number of tokens on each component of $G\setminus \{x,y\}$, totalling $k-1$, and
exactly one token on one vertex of $\{x,y\}$, to the configuration obtained by moving (\emph{flipping}) the token
at $\{x,y\}$ to the other vertex of $\{x,y\}$.
These automorphisms send an induced $4$-cycle generated by moving two tokens on
two disjoint edges to one generated by moving two tokens on a $4$-cycle; thus introducing
what we call a \emph{twist}.
We prove a partial converse: if an isomorphism $\varphi\colon F_k(G)\to F_{k'}(G')$ sends
some $4$-cycle generated by moving two tokens on two disjoint edges to one generated by moving two
tokens on a $4$-cycle, then $G$ and $G'$ have twin cuts $\{x,y\}$ and
$\{x',y'\}$, respectively. We also show that any twist can be undone by
composing $\varphi$ with twin-cut flips.
\end{abstract}
\tableofcontents

\section{Introduction}

Let $G$ be a connected graph on $n$ vertices and $1 \le k \le |G|-1$ an integer.
The $k$-token graph of $G$ is the graph $F_k(G)$ whose vertex set consists of all subsets of $k$
vertices of $G$, where two of them are adjacent whenever their symmetric difference is an edge of $G$.
Token graphs have been defined many
times~\cite{Johns,double_vertex,ktuple,rudolph,godsil,Token}, some of these have been independent.
In this paper we follow the notation and nomenclature of~\cite{Token}.

Let $f$ be an automorphism of $G$. The function that maps
every $\{a_1,\dots,a_k\} \in V(F_k(G))$ to
\[\iota(f)({a_1,\dots,a_k}):=\{f(a_1),\dots,f(a_k)\}\]
is an automorphism of $F_k(G)$. In fact, it was shown in~\cite{sofiamanuel},
that $\iota(\cdot)$ is an injective group homomorphism. Thus,  $\aut(G)$ is always a
subgroup of $\aut(F_k(G))$; where $\aut(G)$ and $\aut(F_k(G))$ are
the automorphism groups of $G$ and $F_k(G)$, respectively.

When $k=n/2$, the function $\mathfrak{c}$ that sends every $A \in F_k(G)$ to its complement
$V(G)\setminus A$ is also an automorphism of $F_k(G)$. It can be verified~\cite{C4Diamond}
in this case, that  $\mathfrak{c}$ is not of the form $\iota(f)$ for
any automorphism $f \in \aut(G)$. Moreover,
$\mathfrak{c}$ commutes with every member of $\iota(\aut(G))$.
Therefore, when $k=n/2$, the subgroup $\langle\iota(\aut(G)),\mathfrak{c}\rangle$
generated by the induced automorphisms and the complement is isomorphic to
$\iota(\aut(G))\times\mathbb{Z}_2$.
We say that an automorphism in $\iota(\aut(G))$, or in $\langle\iota(\aut(G)),\mathfrak{c}\rangle$ when $k=n/2$, is \emph{induced}.

However, $\aut(F_k(G))$ can have many non induced automorphisms. In~\cite{2t_cube}, it has been shown that if $G$ is the Cartesian product of $r$ prime graphs, then
\[
    \mathbb{Z}_{2}^{r-1}\rtimes\aut(G)\leq \aut(F_{2}(G)).
\]

Recently, in~\cite{cut_aut} the authors construct connected graphs whose token graphs have many
non-induced automorphisms. In this paper we prove a partial converse: an isomorphism between token
graphs satisfying a certain local condition forces the twin-cut structure of~\cite{cut_aut}, which we
now describe.

\subsection{Twin cuts and twisted $4$-cycles}

A pair of vertices $x,y$ of $G$ are called \emph{twins} if $N(x)\setminus \{y\}=N(y)\setminus \{x\}$; that is if they have the same neighbours (other than themselves); if in addition $\{x,y\}$ is a cut set
of $G$ we say that $\{x,y\}$ is a \emph{twin cut}.

Suppose that $\{x,y\}$ is a twin cut, and let $C_1,\dots,C_r$ be the components
of $G\setminus\{x,y\}$. Let $\sigma:=(k_1,\dots,k_r)$ be a tuple of non-negative
integers with $\sum_{i=1}^{r}k_i=k-1$ and $0\le k_i\le|C_i|$ for every
$1\le i\le r$.
For a configuration $A\in V(F_k(G))$ with $|A\cap\{x,y\}|=1$, let
\[
  A^{xy}:=(A\setminus\{x,y\})\cup(\{x,y\}\setminus A)
\]
be the configuration obtained by moving the token on $\{x,y\}$ to the other
vertex of $\{x,y\}$. Let $\varphi_\sigma\colon V(F_k(G))\to V(F_k(G))$ be the
function that sends every $A\in V(F_k(G))$ to
\[
  \varphi_\sigma(A):=
  \begin{cases}
    A^{xy} & \text{if } |A\cap\{x,y\}|=1 \text{ and } |A\cap C_i|=k_i
      \text{ for every } 1\le i\le r,\\[2pt]
    A & \text{otherwise.}
  \end{cases}
\]
It can be shown that $\varphi_\sigma$ is a non-induced automorphism of $F_k(G)$.

Let $C$ be an induced $4$-cycle of $F_{k}(G)$.
It has been shown in~\cite[Proposition~4.1]{C4Diamond} that $C$ is generated
in one of the four ways depicted in Figure~\ref{fig:ways}.
We say that induced $4$-cycles of $F_{k}(G)$ generated as in $(1)$, $(2)$ or $(3)$ of Figure~\ref{fig:ways}
are \emph{generated by moving:} one, two, or three tokens on a $4$-cycle, respectively.
We say that induced $4$-cycles of $F_{k}(G)$ generated as in $(4)$ of Figure~\ref{fig:ways} are
\emph{generated by disjoint edges}.

\begin{figure}[t]
	\centering
	\includegraphics[width=0.9\textwidth]{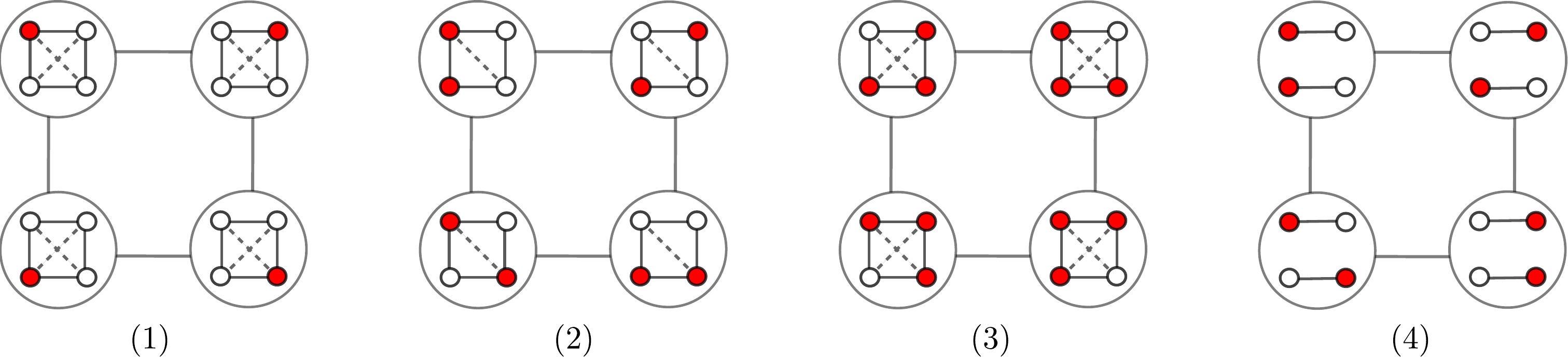}
	\caption{The four ways in which an induced $4$-cycle of $F_k(G)$ can be generated; tokens that are not shown are
assumed to remain fixed; and dashed lines are forbidden in $G$.}\label{fig:ways}
\end{figure}

The flip $\varphi_\sigma$ sends an induced $4$-cycle generated by disjoint edges to one generated by
moving two tokens on a $4$-cycle. In this paper we show a converse: whenever an isomorphism between
token graphs exhibits this behaviour, the underlying graphs carry a twin cut, and the behaviour can be
undone by a twin-cut flip. We work in the general setting of an isomorphism between $F_k(G)$ and the
token graph $F_{k'}(G')$ of another graph $G'$.

Let $\varphi$ be an isomorphism from $F_k(G)$ to $F_{k'}(G')$ that sends
an induced $4$-cycle, $(A_1,A_2,A_3,A_4)$,
generated by two disjoint edges $e_1$ and $e_2$, to an induced $4$-cycle, $(A_1':=\varphi(A_1),A_2':=\varphi(A_2), A_3':=\varphi(A_3),A_4':=\varphi(A_4))$, generated by moving two tokens on $4$-cycle $C'$ of $G'$.
Let $\{v',w'\}$ the set of the two vertices of $C'$ that cannot be adjacent in $G'$; and let $\{x', y'\}$ be the set of the other two vertices of $C'$.
In what follows let
\[X:=A_1\cap A_2 \cap A_3 \cap A_4 \textrm{ and }
X':=A_1'\cap A_2' \cap A_3' \cap A_4'.\]
Without loss of generality, assume that
\[A_1'=X'\cup \{x',y'\}, A_2'=X'\cup \{w',y'\}, A_3'=X'\cup \{w',v'\} \textrm{ and } A_4'=X'\cup \{v',y'\}.\]
Furthermore, let \[A_5':=X' \cup \{x',v'\} \textrm{ and } A_6':=X' \cup \{x',w'\}.\]
See Figure~\ref{fig:cycles}(right).

In this paper we show the following structural results about $G$, $G'$ and $\varphi$.

\begin{restatable}{theorem}{mainlabel}\label{thm:G}
Let $\{x,v\}$ and $\{y,w\}$ the endpoints of $e_1$ and $e_2$, respectively. These vertices can be chosen so that the following hold.
\begin{enumerate}[label=\textup{(\Roman*)},leftmargin=2.6em]
\item\label{m:xy} $x$ is adjacent to $y$ in $G$, if and only if, $x'$ is adjacent to $y'$ in $G'$;

\item\label{m:vw}  $xw,yv \in E(G)$;

\item\label{m:CN} $N_G(x)\setminus \{y\}=N_G(y)\setminus\{x\}$
and $N_{G'}(x')\setminus \{y'\}=N_{G'}(y')\setminus\{x'\}$; and

\item\label{m:comp} $v,w$ lie in different components of $G \setminus \{x,y\}$,
and $v',w'$ lie in different components of $G' \setminus \{x',y'\}$.
\end{enumerate}

\end{restatable}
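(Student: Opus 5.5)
The plan is to pull structure back and forth through $\varphi$ using invariants that an isomorphism preserves: adjacency, common neighbourhoods of pairs at distance two, and the types of induced $4$-cycles from \cite[Proposition~4.1]{C4Diamond}. Write $A_1=X\cup\{a,b\}$ and $A_3=X\cup\{c,d\}$, where $e_1$ joins $a$ and $c$ and $e_2$ joins $b$ and $d$. Then $A_2$ and $A_4$ are obtained from $A_1$ by moving one token along $e_1$ or along $e_2$.

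\textbf{Step 1: part \ref{m:vw}.} Since $A_1'$ and $A_3'$ are antipodal in the $4$-cycle, they have exactly the four common neighbours $A_2',A_4',A_5',A_6'$ in $F_{k'}(G')$. Hence $A_1$ and $A_3$ have exactly four common neighbours in $F_k(G)$. Any common neighbour of $A_1$ and $A_3$ has the form $X\cup\{p,q\}$ with $p\in\{a,b\}$ and $q\in\{c,d\}$. Two of these are $A_2$ and $A_4$. The other two, $X\cup\{a,c\}$ and $X\cup\{b,d\}$, are common neighbours exactly when $ad,bc\in E(G)$. So both cross edges exist. Set $x:=a$, $v:=c$, $y:=b$, $w:=d$, so that $e_1=xv$, $e_2=yw$ and $xw,yv\in E(G)$. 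Choosing which endpoint is called $x$ is the freedom granted in the statement.

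\textbf{Step 2: match the extra neighbours and prove \ref{m:xy}.} Next we decide which of $A_5',A_6'$ corresponds to $B_5:=X\cup\{x,v\}$ and which to $B_6:=X\cup\{y,w\}$. We use adjacency to $A_2',A_4'$.
\begin{itemize}
\item In $G'$, $A_5'$ is adjacent to $A_4'$ iff $x'y'\in E(G')$, and $A_5'$ is adjacent to $A_2'$ iff $v'y'\in E(G')$. The latter is excluded because $v'$ is the vertex of $C'$ forbidden to be adjacent to $y'$. The same holds for $A_6'$ with $A_2'$ and $A_4'$ exchanged.
\item In $G$, $B_5$ is adjacent to $A_2$ or $A_4$ exactly through the edges $xy$ or $vw$.
\end{itemize}
The forbidden dashed pairs in configuration (4) of Figure~\ref{fig:ways} rule out $vw$, or the corresponding pair after relabelling; we may need to swap the roles of $x$ and $v$ (and of $y$ and $w$) to get this. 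After that, comparing these adjacencies gives $xy\in E(G)\iff x'y'\in E(G')$, which is \ref{m:xy}. It also fixes the identification $\varphi(B_5)=A_5'$ and $\varphi(B_6)=A_6'$ up to symmetry.

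\textbf{Step 3: twins, part \ref{m:CN}.} Suppose $z\in N_G(x)\setminus(N_G(y)\cup\{y\})$. We form new configurations from $A_1,\dots,B_6$ by moving the token on $x$ to $z$ (or by moving a token of $X$ onto $x$ when $z$ is occupied). We then count common neighbours and induced $4$-cycles through these configurations.
\begin{itemize}
\item In $F_{k'}(G')$, the six configurations $X'\cup\{\cdot,\cdot\}$ on $C'$ form a rigid pattern: the $4$-cycle $A_1'A_2'A_3'A_4'$ together with $A_5',A_6'$.
\item Any neighbour of $A_1'$ outside this pattern has a corresponding neighbour of $A_3'$ at the same "position". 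The reason is that $x'$ and $y'$ play symmetric roles relative to $v'$ and $w'$.
\item Pulling this back, moving the token along $xz$ must be mirrored by some move at $y$. We expect this to force $yz\in E(G)$.
\end{itemize}
The symmetric argument handles $y$, and running the same argument from the $G'$ side with $\varphi^{-1}$ gives twins in $G'$. This step is where I expect the main obstacle. The clean way is probably to exhibit an induced $4$-cycle, or a distance-two pair, whose number of common neighbours differs depending on whether $yz\in E(G)$. We then check, using the classification of induced $4$-cycles, that its image forces the corresponding count in $G'$. Some care is needed when $z\in X$ or $z\in\{v,w\}$.

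\textbf{Step 4: components, part \ref{m:comp}.} Suppose there is a path from $v$ to $w$ in $G\setminus\{x,y\}$, and take a shortest one, $P$. Slide the tokens along $P$: either move the token at $v$ toward $w$, or move tokens of $X$ out of the way, keeping the token on $y$ fixed. This gives a sequence of configurations starting at $A_2$ and staying in configurations with exactly one token on $\{x,y\}$. The twin property from Step 3 keeps every intermediate $4$-cycle of disjoint-edge type. Transporting the sequence by $\varphi$, the images must stay inside the "flipped" pattern, and at the end of the sequence we obtain a contradiction: either $v'$ and $w'$ become adjacent, or an induced $4$-cycle changes type in a way Proposition~4.1 forbids. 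Applying the same argument to $\varphi^{-1}$ gives the statement for $G'$.
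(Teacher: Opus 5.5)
Your Steps 1--2 essentially reproduce the paper's argument for (I) and (II), but two justifications need fixing. First, $A_5'\not\sim A_2'$ holds simply because $|A_5'\triangle A_2'|=4$; in fact $v'y'$ is an edge of $C'$, not a forbidden pair. Second, $vw\notin E(G)$ does not come from Figure~\ref{fig:ways}, since a disjoint-edge $4$-cycle is always induced. It comes from $\varphi$: whichever of $A_5',A_6'$ is $\varphi(X\cup\{x,v\})$ is never adjacent to $A_2'$, resp.\ $A_4'$, which forces $vw\notin E(G)$, resp.\ $xy\notin E(G)$ (the latter is the case needing your swap). After that swap $A_1=X\cup\{v,w\}$, so the correspondence $X\cup\{x,y\}\mapsto X'\cup\{x',y'\}$ used later is lost; the paper restores it by passing to $\varphi\circ\mathfrak{c}$.

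The genuine gap is that Steps 3 and 4 contain no argument, and they are the substance of the theorem. In Step 3 your stated reason (``$x'$ and $y'$ play symmetric roles relative to $v',w'$'') is the twin property itself. You also never produce a configuration whose common-neighbour count would detect $yz\in E(G)$. The paper's device works as follows. Assume $b'\in N_{G'}(x')\setminus(N_{G'}(y')\cup\{y'\})$ and, via the swap and complementation symmetries, $b'\notin X'$. Take the secondary $4$-cycle $B_1'=X'\cup\{b',y'\}$, $B_2'=X'\cup\{b',w'\}$, $B_3'=X'\cup\{b',x'\}$, $B_4'=X'\cup\{b',v'\}$, which is attached to $A_1',A_6',A_5'$ while $B_3'$ is adjacent to none of $A_1',\dots,A_4'$. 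Pulling back gives $B_2\in\CN(B_1,A_6)$, $B_4\in\CN(B_1,A_5)$ and $B_3\in\CN(B_2,B_4)$. An exhaustive case analysis with Lemma~\ref{lem:cn}, over how $B_1$ arises from $A_1$, then forces $B_3$ to be adjacent to some $A_i$. Inversion gives the $G$-side.

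In Step 4, ``the images must stay inside the flipped pattern'' is exactly what must be proved. The contradiction you aim for cannot come from the classification of induced $4$-cycles, because $\varphi$ changing the type of a $4$-cycle is the hypothesis. The missing idea is to transport pairs rather than single configurations. For $xy$-pairs $(P,Q)$, i.e.\ $P\triangle Q=\{x,y\}$, one has $|\CN(P,Q)|=|N|$, an isomorphism invariant. The Step Lemma (Lemma~\ref{lem:comp-step}) shows that moving a token of $P\cap Q$ keeps $\varphi^\ast(P),\varphi^\ast(Q)$ an $x'y'$-pair with $x'$ on the $P$-side whenever $xy\in E(G)$ or $|N|\ge 3$. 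A chain from $(A_5,A_4)$ to $(A_2,A_6)$ then contradicts $x'\notin A_2'$.

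When $xy\notin E(G)$ and $N=\{v,w\}$, local counting no longer rules out losing this property (cases (A) and (B) of that lemma). The paper then needs three further ingredients. The first is a stronger ``aligned'' invariant, which also matches Zero- and Double-type common neighbours. The second is the fact that only arrivals at $n=0$ and departures at $n=2$ can destroy it. The third is a chain along a shortest $v$--$w$ path, started at $(A_5,A_4)$ or at the mirrored pair $(A_2,A_6)$, in which only the final move is vulnerable, so the weak invariant still yields the contradiction. None of this appears in your sketch, and without it Step 4 does not go through.
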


In what follows let $x,y,v,w,x',y',v'$ and $w'$ be as provided by Theorem~\ref{thm:G}.
Let $G_1,\dots,G_r$ be the connected components of $G \setminus \{x,y\}$ and let
$G_1',\dots,G_{r'}'$ be the connected components of $G' \setminus \{x',y'\}$.
Without loss of generality assume that $v \in G_1, w \in G_2, v' \in G_1'$ and $w' \in G_2'$.

Let
\[V_{x,1}:=\left \{ A \in V(F_k(G)): x \in A \textrm{ and }
|A \cap G_i|=|A_5 \cap G_i| \textrm{ for all } 1 \le i \le r \right \},\]
\[V_{y,1}:=\left \{ A \in V(F_k(G)): y \in A \textrm{ and }
|A \cap G_i|=|A_4 \cap G_i| \textrm{ for all } 1 \le i \le r \right \},\]
\[V_{x,2}:=\left \{ A \in V(F_k(G)): x \in A \textrm{ and }
|A \cap G_i|=|A_2 \cap G_i| \textrm{ for all } 1 \le i \le r \right \},\]
\[V_{y,2}:=\left \{ A \in V(F_k(G)): y \in A \textrm{ and }
|A \cap G_i|=|A_6 \cap G_i| \textrm{ for all } 1 \le i \le r \right \}.\]
Likewise, let
\[V_{x',1}':=\left \{ A \in V(F_{k'}(G')): x' \in A \textrm{ and }
|A \cap G_i'|=|A_5' \cap G_i'| \textrm{ for all } 1 \le i \le r' \right \},\]
\[V_{y',1}':=\left \{ A \in V(F_{k'}(G')): y' \in A \textrm{ and }
|A \cap G_i'|=|A_4' \cap G_i'| \textrm{ for all } 1 \le i \le r' \right \},\]
\[V_{x',2}':=\left \{ A \in V(F_{k'}(G')): x' \in A \textrm{ and }
|A \cap G_i'|=|A_6' \cap G_i'| \textrm{ for all } 1 \le i \le r' \right \},\]
\[V_{y',2}':=\left \{ A \in V(F_{k'}(G')): y' \in A \textrm{ and }
|A \cap G_i'|=|A_2' \cap G_i'| \textrm{ for all } 1 \le i \le r' \right \}.\]
Note that
\[A_5 \in V_{x,1}, \quad A_4 \in V_{y,1},\quad A_2 \in V_{x,2},\quad A_6 \in V_{y,2}\]
and
\[A_5' \in V_{x',1}',\quad A_4' \in V_{y',1}',\quad A_6' \in V_{x',2}',\quad A_2' \in V_{y',2}'.\]

\begin{restatable}{theorem}{tokencorr}\label{thm:token}
Either
\begin{itemize}
\item[$(a)$] \[ \varphi(V_{x,1})=V_{x',1}', \quad \varphi(V_{y,1})=V_{y',1}',
\quad \varphi(V_{x,2})=V_{y',2}', \quad \varphi(V_{y,2})=V_{x',2}';\]

\item[$(b)$] or \[ \mathfrak{c} \circ \varphi(V_{x,1})=V_{x',1}',
\quad \mathfrak{c} \circ \varphi(V_{y,1})=V_{y',1}',
\quad \mathfrak{c} \circ \varphi(V_{x,2})=V_{y',2}',
\quad \mathfrak{c} \circ \varphi(V_{y,2})=V_{x',2}';\]
in this case $k'=|G'|/2.$
\end{itemize}
\end{restatable}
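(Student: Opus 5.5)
We first pin down the setting. By Theorem~\ref{thm:G} we may take $A_1=X\cup\{x,y\}$, $A_2=X\cup\{x,w\}$, $A_3=X\cup\{v,w\}$, $A_4=X\cup\{v,y\}$, and also $A_5=X\cup\{x,v\}$, $A_6=X\cup\{y,w\}$, with $xw,yv\in E(G)$. Then $A_5,A_6$ are common neighbours in $F_k(G)$ of the appropriate pairs, mirroring $A_5',A_6'$ in $F_{k'}(G')$.

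The strategy is to show that each of the four sets is a connected block that $\varphi$ must carry onto a block of the same kind. Fix, say, $V_{x,1}$. The configurations $A$ with $x\in A$ and prescribed counts $|A\cap G_i|$ form a connected subgraph of $F_k(G)$. Moving tokens inside a component $G_i$, with $x$ fixed, is a Cartesian product of token graphs of the connected graphs $G_i$. By Theorem~\ref{thm:G}\ref{m:CN}, $x$ and $y$ are twins, so the map $A\mapsto A^{xy}$ is a bijection $V_{x,1}\to V_{y,1}$. Every $A\in V_{x,1}$ is adjacent to $A^{xy}$ when $xy\in E(G)$, and otherwise $A$ and $A^{xy}$ are joined through common neighbours. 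We would give the analogous description of $V_{x',1}'$, and similarly for the index-$2$ sets, which are distinguished by whether the extra token sits in $G_1$ or in $G_2$.

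First we show $\varphi(A_5)=A_5'$ and $\varphi(A_6)=A_6'$, after possibly composing with $\mathfrak{c}$. The common neighbours of $A_1,A_3$ and of $A_2,A_4$ are counted in both graphs: in $F_k(G)$ the pair $A_2,A_4$ has common neighbours $A_1,A_3$ together with $A_5$ or $A_6$, and the diagonals in $F_{k'}(G')$ behave analogously. Since the twin-cut structure forbids further common neighbours, $\varphi$ must match these vertices. The complementation alternative enters precisely because an isomorphism may reverse the roles of ``token present'' and ``token absent'', and this is possible only when $k'=|G'|/2$.

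Next we propagate along edges. Take $A\in V_{x,1}$ and a neighbour $B$ obtained by moving a token inside some $G_i$. We want to show that $\varphi(B)$ stays in the block containing $\varphi(A)$. The tool is the characterisation of induced $4$-cycles from~\cite[Proposition~4.1]{C4Diamond}. The edge $A\,A^{xy}$ (or the corresponding path) together with the edge $AB$ generates a $4$-cycle by disjoint edges, namely $A, B, B^{xy}, A^{xy}$. Its image under $\varphi$ must again be an induced $4$-cycle. Arguing inductively, we show that the image edge $\varphi(A)\varphi(A^{xy})$ remains a flip $x'\leftrightarrow y'$ edge, and that $\varphi(A)\varphi(B)$ moves a token inside one component of $G'\setminus\{x',y'\}$. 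Hence the component counts and the occupancy of $\{x',y'\}$ are preserved along the image.

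The main obstacle is this last step. One must rule out that the image of such a $4$-cycle becomes a twisted cycle generated on a $4$-cycle of $G'$ involving $x',y'$ and another vertex. To exclude it we would invoke the twin property of $x',y'$ and the fact that $v',w'$ lie in different components, from Theorem~\ref{thm:G}\ref{m:CN} and~\ref{m:comp}. These imply that any $4$-cycle through $x'$ or $y'$ uses both of them, and then a count of common neighbours distinguishes the two $4$-cycle types.

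Connectivity then gives $\varphi(V_{x,1})\subseteq V_{x',1}'$. Applying the same argument to $\varphi^{-1}$ gives equality. The remaining three equalities follow from the same argument. The swap of $x'$ and $y'$ for index $2$ is forced by $\varphi(A_2)=A_2'\in V'_{y',2}$ and $\varphi(A_6)=A_6'\in V'_{x',2}$.
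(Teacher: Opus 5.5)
Your skeleton matches the paper's proof. You join $B\in V_{x,1}$ to $A_5$ by a path inside the block and carry the pair $(P,P^{xy})$ along it. An image move that avoids $x',y'$ stays inside one component of $G'\setminus\{x',y'\}$. The remaining inclusions come from $(\varphi^\ast)^{-1}$ and the mirrored orientation. But the step that carries all the weight is not proved: that $\varphi^\ast(P)\triangle\varphi^\ast(P^{xy})=\{x',y'\}$ with $x'\in\varphi^\ast(P)$ survives a move $u\to z$ inside a component.

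Your $4$-cycle $A,B,B^{xy},A^{xy}$ exists only when $xy\in E(G)$. When $xy\notin E(G)$, the configurations $A$ and $A^{xy}$ are at distance two and there is no such cycle; ``the corresponding path'' is not an argument, and this is exactly the hard case. Your mechanism for excluding a twisted image is also wrong. It is not true that every $4$-cycle of $G'$ through $x'$ or $y'$ contains both. Take $G'=G$, $k=2$, and $\varphi$ a twin-cut flip for a graph with $N(x)=N(y)=\{v,w,a\}$, $xy\notin E(G)$, $\{w\}$ a separate component of $G\setminus\{x,y\}$, and $v\,b\,a$ a path in the component of $v$. Then $x\,a\,b\,v$ is a $4$-cycle avoiding $y$. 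The claim holds only when $N'=\{v',w'\}$, and even then it needs \ref{m:comp}. Finally, ``a count of common neighbours distinguishes the two types'' is left unsubstantiated.

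What is missing is the content of the paper's Step Lemma and its updated form, in three ingredients.
\begin{enumerate}
\item The twin property of $x',y'$ forces $\varphi^\ast(\widetilde P)=(P'\setminus\{u'\})\cup\{t'\}$ and $\varphi^\ast(\widetilde Q)=(Q'\setminus\{s'\})\cup\{r'\}$ with $u',s'\in S'$ and $t',r'\notin S'\cup\{x',y'\}$. Otherwise $\varphi^\ast(\widetilde P)$ would equal or be adjacent to $Q'$, whereas $\widetilde P\neq Q$ and $\widetilde P\not\sim Q$.
\item Since $|\CN(\widetilde P,\widetilde Q)|=|N|\ge 2$, a primed symmetric difference of size $6$ is impossible. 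If $xy\in E(G)$, adjacency of $\widetilde P,\widetilde Q$ then gives size $2$. If $xy\notin E(G)$ and $|N|\ge3$, size $4$ is excluded because $x'y'\notin E(G')$ caps the count at $2$.
\item In the residual case $xy\notin E(G)$, $|N|=2$, two degenerate alternatives survive: $u'=s'$ with $\{t',r'\}=\{v',w'\}$, or $t'=r'$ with $\{u',s'\}=\{v',w'\}$. Both are killed by $N_{G'}(v')\cap N_{G'}(w')=\{x',y'\}$, which follows from \ref{m:comp}.
\end{enumerate}
Without these, your induction does not go through.

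A minor point: $\varphi^\ast(A_i)=A_i'$ is already part of the standing configuration, and your account of it is off. $A_2,A_4$ have only $A_1,A_3$ as common neighbours, since $vw\notin E(G)$. It is $A_1,A_3$ that have the four common neighbours $A_2,A_4,A_5,A_6$.
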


In the language of Theorem~\ref{thm:token}, $\varphi$ exhibits a
\emph{twist}: relative to $v$, $\varphi$ pairs $x$ with $x'$ and $y$ with $y'$,
yet relative to $w$ it pairs $x$ with $y'$ and $y$ with $x'$. Our third main
result is that this twist can be removed by a local modification of $\varphi$.
We say that an isomorphism between token graphs \emph{twists} a $4$-cycle generated
by two disjoint edges if it sends that cycle to one generated by moving two
tokens on a $4$-cycle.

\begin{restatable}[Untwisting]{theorem}{untwistcycle}\label{thm:untwist_cycle}
There is a twin-cut automorphism $\psi$ of a token graph of $G$ such that the
isomorphism to $F_{k'}(G')$ obtained by composing $\varphi$ with $\psi$
sends $(A_1,A_2,A_3,A_4)$ to a $4$-cycle generated by two disjoint edges, and
twists no $4$-cycle generated by two disjoint edges that $\varphi$ does not.
\end{restatable}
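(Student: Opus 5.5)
We will take $\psi:=\varphi_\sigma$, the flip associated with the twin cut $\{x,y\}$ of $G$ from Theorem~\ref{thm:G}\ref{m:CN},\ref{m:comp}, with $\sigma$ chosen so that it moves exactly one vertex of the cycle $(A_1,A_2,A_3,A_4)$. First we will fix notation. By Theorem~\ref{thm:G}\ref{m:vw}, $e_1=xv$ and $e_2=yw$ are disjoint edges with $xw,yv\in E(G)$. The four configurations are $X\cup\{x,y\}$, $X\cup\{x,w\}$, $X\cup\{v,w\}$, $X\cup\{v,y\}$ in the appropriate cyclic order. Exactly two of them, say $A_a=X\cup\{x,w\}$ and $A_b=X\cup\{v,y\}$, contain exactly one of $x,y$. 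Their token counts on the components $G_i$ of $G\setminus\{x,y\}$ differ, because $v\in G_1$ and $w\in G_2$ lie in different components by \ref{m:comp}. So we can take $\sigma$ to be the count vector of $A_a$ on $G_1,\dots,G_r$; its entries sum to $k-1$. Then $\psi$ sends $A_a$ to $X\cup\{y,w\}$ and fixes $A_1,\dots,A_4$ otherwise. The image $(X\cup\{x,y\},X\cup\{y,w\},X\cup\{v,w\},X\cup\{v,y\})$ is again a $4$-cycle in $F_k(G)$, but it is generated by moving two tokens on the $4$-cycle $x\,y\,v\,w$ region. Because $x,y$ are twins, the relevant adjacencies $xw,yw,yv$ all exist and $vw\notin E(G)$.

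Second, we will set $\tilde\varphi:=\varphi\circ\psi^{-1}$, an isomorphism $F_k(G)\to F_{k'}(G')$. We need to locate the preimage under $\tilde\varphi$ of a disjoint-edge cycle mapping to $(A_1',\dots,A_4')$. The cleaner route is the mirror construction on $G'$: use the twin cut $\{x',y'\}$ of $G'$ and compose on the $G'$ side with $\psi'=\varphi'_{\sigma'}$. Here $\sigma'$ is the count vector of $A_2'$, equivalently of $V'_{y',2}$, so that $\psi'$ flips $A_2'=X'\cup\{w',y'\}$ to $X'\cup\{w',x'\}=A_6'$. Then $\psi'\circ\varphi$ sends $(A_1,\dots,A_4)$ to $(X'\cup\{x',y'\},X'\cup\{x',w'\},X'\cup\{v',w'\},X'\cup\{v',y'\})$. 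Since $x'\not\sim v'$ fails only for the pair $v'w'$, the edges are $x'w'$ and $y'v'$, which are disjoint. This cycle is generated by two disjoint edges provided $x'v'$ and $y'w'$ form the forbidden diagonals. We will check this using the twin property together with \ref{m:comp}: $v'$ and $w'$ are in different components. Since the statement allows composing with a twin-cut automorphism of a token graph of $G$, we will transport $\psi'$ back via $\varphi$, taking $\psi:=\varphi^{-1}\psi'\varphi$ if needed. Alternatively we will use Theorem~\ref{thm:token}, which says $\varphi$ maps $V_{x,2}\to V'_{y',2}$ and $V_{y,2}\to V'_{x',2}$. This lets us identify $\varphi^{-1}\psi'\varphi$ with the genuine flip $\varphi_\sigma$ for $\sigma=$ the counts of $A_2$. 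In case (b) we first replace $\varphi$ by $\mathfrak c\circ\varphi$.

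Third, and this is the main obstacle, we must show that no new twists are created. The flip changes only configurations in $V_{x,2}\cup V_{y,2}$. So a disjoint-edge $4$-cycle $Q$ in $F_k(G)$ can change status only if it meets these sets. We will run a case analysis on how $Q$ meets them:
\begin{itemize}
\item If $Q$ lies entirely inside $V_{x,2}\cup V_{y,2}$ or avoids it, then $\psi$ acts on $Q$ as an induced map or as the identity, so the type of $Q$ is preserved.
\item If $Q$ crosses the boundary, one of its edges moves the token between $x$ or $y$ and a component. By Theorem~\ref{thm:token}, $\varphi$ already pairs the $x,y$ labels of $V_{x,2},V_{y,2}$ with the swapped labels in $G'$. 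Composing with the flip aligns them consistently with the pairing on $V_{x,1},V_{y,1}$.
\end{itemize}
Using this correspondence, we will verify that the image cycle is of disjoint-edge type after the flip whenever it was before. Such cycles are characterized by their two edges moving tokens on disjoint vertex pairs, a property we can read from which blocks $V_{\cdot,\cdot}$ the four vertices lie in. This bookkeeping over the possible positions of $Q$ relative to the twin cut is where we expect most of the work.
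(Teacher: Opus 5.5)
\textbf{The gap: the second assertion.} Your third step is only an outline, and you say yourself that the bookkeeping is where most of the work lies. For the $\psi$ you chose it cannot be completed, because the assertion is then false. Your $\psi$ is exactly the paper's: the flip at $\{x,y\}$ with the profile $\sigma$ of $A_2$. Theorem~\ref{thm:token} controls only the blocks of $A_5$ and $A_2$. A disjoint-edge cycle at $\{x,y\}$ that joins the $\sigma$-block to some other block has exactly one vertex moved by $\psi$, and nothing you have prevents this from creating a twist.

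Here is a concrete counterexample. Let $G=G'=K_{2,4}$ with parts $\{x,y\}$ and $\{a,b,c,d\}$, and $k=k'=2$. For $p\in\{a,b,c,d\}$ let $\varphi_p$ be the flip exchanging $\{x,p\}$ and $\{y,p\}$; these are the only twin-cut automorphisms of $F_2(G)$. A direct check shows that $\prod_{p\in T}\varphi_p$ sends the disjoint-edge cycle $(\{x,y\},\{x,q\},\{p,q\},\{p,y\})$ to a two-token cycle exactly when $|\{p,q\}\cap T|=1$.

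Take $\varphi=\varphi_a\circ\varphi_b$ and $C=(\{x,y\},\{x,a\},\{a,c\},\{c,y\})$; here $q=a$ and $p=c$, so $|\{c,a\}\cap\{a,b\}|=1$ and $\varphi$ twists $C$. The labelling of Theorem~\ref{thm:G} gives $\{v,w\}=\{a,c\}$. So your $\psi$ is $\varphi_a$ or $\varphi_c$, depending on the orientation chosen for the image cycle.

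If $\psi=\varphi_a$, then $\varphi\circ\psi=\varphi_b$. It sends $Q=(\{x,y\},\{x,b\},\{a,b\},\{a,y\})$ to $(\{x,y\},\{y,b\},\{a,b\},\{a,y\})$, which moves two tokens on the $4$-cycle $x\,b\,y\,a$. But $\varphi(Q)=(\{x,y\},\{y,b\},\{a,b\},\{x,a\})$ is generated by the disjoint edges $xb$ and $ya$, so $Q$ is a new twist. If $\psi=\varphi_c$, the cycle $(\{x,y\},\{x,d\},\{c,d\},\{c,y\})$ becomes newly twisted instead. Only $\varphi_a$ and $\varphi_c$ untwist $C$, so no single flip meets both requirements here. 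The statement itself therefore fails when $\psi$ must be a single flip, although the composition $\varphi_a\circ\varphi_b$ removes every twist.

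\textbf{The paper does not supply the missing argument.} Its case analysis assigns the twisted cycle a single block, of one profile $t$, of its cut. But by~\ref{m:comp} of Theorem~\ref{thm:G}, the two single-cut-token vertices of a twisted disjoint-edge cycle lie in blocks of different profiles. The cycle $Q$ above meets both the $\sigma$-block and the $b$-block, and neither of the paper's cases ``$t=\sigma$'' and ``$t\neq\sigma$'' handles it correctly.

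\textbf{The first assertion} does hold, but your argument for it is more tangled than necessary. The flip $\psi$ fixes $A_1,A_3,A_4$ and sends $A_2$ to $A_6$. Since $\varphi^\ast(A_6)=A_6'$ is part of the standing configuration, the image is $(A_1',A_6',A_3',A_4')$, with consecutive differences $y'w',x'v',y'w',x'v'$. So the generating disjoint edges are $x'v'$ and $y'w'$; these are edges of $C'$, not the ``forbidden diagonals'' you describe. The detour through a flip $\psi'$ of $G'$ is unnecessary. Identifying $\varphi^{-1}\psi'\varphi$ with $\psi$ would in any case need the pointwise identity $\varphi^\ast(B^{xy})=\varphi^\ast(B)^{x'y'}$ on the $\sigma$-block, which Theorem~\ref{thm:token} does not state.

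\textbf{A recurring slip.} In your first step and in the ``inside'' case of your third step, you reason about the type of $\psi(Q)$ in $F_k(G)$. What matters is whether $\varphi(\psi(Q))$ has the same type as $\varphi(Q)$, and the type of $\psi(Q)$ says nothing about that.
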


In fact, all such twists can be removed at once.

\begin{restatable}{theorem}{twistfree}\label{thm:twistfree}
There is a composition $\Psi$ of twin-cut automorphisms of a token graph of $G$
such that the isomorphism to $F_{k'}(G')$ obtained by composing $\varphi$ with
$\Psi$ twists no $4$-cycle generated by two disjoint edges.
\end{restatable}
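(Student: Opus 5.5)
We will prove Theorem~\ref{thm:twistfree} by iterating Theorem~\ref{thm:untwist_cycle}, using a decreasing potential. Call a $4$-cycle of $F_k(G)$ generated by two disjoint edges \emph{twisted by $\theta$} if the isomorphism $\theta$ sends it to a $4$-cycle generated by moving two tokens on a $4$-cycle. Let $T(\theta)$ be the set of such cycles; it is finite since $F_k(G)$ is finite. Start with $\Psi_0:=\mathrm{id}$ and $\theta_0:=\varphi$. If $T(\theta_0)=\emptyset$ we are done.

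Otherwise pick a cycle $(A_1,A_2,A_3,A_4)\in T(\theta_0)$. The hypotheses under which Theorems~\ref{thm:G}--\ref{thm:untwist_cycle} were stated only require an isomorphism $F_k(G)\to F_{k'}(G')$ sending some cycle generated by two disjoint edges to one generated by moving two tokens on a $4$-cycle. So they apply verbatim to $\theta_0$ with this cycle. Theorem~\ref{thm:untwist_cycle} then supplies a twin-cut automorphism $\psi_1$ such that $\theta_1:=\theta_0\circ\psi_1$ satisfies two properties: it does not twist $(A_1,A_2,A_3,A_4)$, and $T(\theta_1)\subseteq T(\theta_0)$. Hence $|T(\theta_1)|<|T(\theta_0)|$.

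One point needs care here: the meaning of ``composing $\varphi$ with $\psi$''. Composition should be $\varphi\circ\psi$ with $\psi\in\aut(F_k(G))$, so that the domain stays $F_k(G)$ and cycles of $F_k(G)$ are compared for the same map. If instead the paper's $\psi$ acts on $F_{k'}(G')$, the argument is symmetric, since the set of cycles of $F_k(G)$ is still the set being tracked. I will also check that the cycle-type classification of images is what $T$ measures, so that ``twists no cycle that $\varphi$ does not'' is exactly $T(\theta_1)\subseteq T(\theta_0)$.

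Repeat the step: set $\theta_{i+1}:=\theta_i\circ\psi_{i+1}$. Each $\theta_i$ is again an isomorphism $F_k(G)\to F_{k'}(G')$, because automorphisms compose with isomorphisms to give isomorphisms, so Theorem~\ref{thm:untwist_cycle} is applicable at every stage. Since $|T(\theta_i)|$ strictly decreases, after at most $|T(\varphi)|$ steps we reach some $m$ with $T(\theta_m)=\emptyset$. Then $\Psi:=\psi_1\circ\cdots\circ\psi_m$ is a composition of twin-cut automorphisms and $\varphi\circ\Psi=\theta_m$, as required.

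The main potential obstacle is the legitimacy of reapplying the earlier theorems to $\theta_i$ rather than $\varphi$. The cycle, the vertices $x,y,v,w$ and the twin cut all depend on the current map, so each new twin cut comes from Theorem~\ref{thm:G} applied to $\theta_i$. The statements are formulated for an arbitrary such isomorphism, so this should be harmless. The weight of the argument therefore rests on the monotonicity clause of Theorem~\ref{thm:untwist_cycle}.
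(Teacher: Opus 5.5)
Your proposal is correct and is the paper's argument: the paper proves Theorem~\ref{thm:twistfree} by successive applications of Theorem~\ref{thm:untwist_cycle}, and your strictly decreasing finite count $|T(\theta_i)|$ just makes the termination explicit. On the point you flagged, the paper's $\psi$ never acts on $F_{k'}(G')$; when $\varphi^\ast=\varphi\circ\mathfrak{c}$ it acts on $F_{|G|-k}(G)$. You keep the domain $F_k(G)$ by using $\mathfrak{c}\circ\psi\circ\mathfrak{c}$ instead, which is the twin-cut flip of $F_k(G)$ at the same cut $\{x,y\}$ with profile $(|G_i|-\sigma_i)_i$, together with the fact that $\mathfrak{c}$ preserves the type of induced $4$-cycles.
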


The paper is organized as follows. In Section~\ref{sec:structure} we prove
Theorem~\ref{thm:G}, which extracts the twin-cut structure of $G$ and $G'$ from
the behaviour of $\varphi$. In Section~\ref{sec:untwist} we prove
Theorem~\ref{thm:token}, describing how $\varphi$ pairs the two twin cuts, and
then Theorems~\ref{thm:untwist_cycle} and~\ref{thm:twistfree}, which undo the
twist by composing $\varphi$ with twin-cut flips.

\medskip\noindent\textbf{Declaration of AI use.}
The strategy of this paper, together with the proofs of parts~\ref{m:xy},
\ref{m:vw} and~\ref{m:CN} of Theorem~\ref{thm:G}, is due to the authors and
predates our use of artificial intelligence. Part~\ref{m:comp} of
Theorem~\ref{thm:G} resisted our efforts for a considerable time; it was first
proved by Anthropic's Claude Fable 5, after which we reorganized and verified the
argument and found it correct. We used OpenAI's ChatGPT~5.5 to help write the
case analysis in the proof of Theorem~\ref{thm:G} and to find the symmetries
that reduce its cases, and Anthropic's Claude Opus 4.8 to help develop and write the
results of Section~\ref{sec:untwist}. Every argument was checked in detail by the
authors, who take full responsibility for the correctness of the paper.

\section{Proof of Theorem~\ref{thm:G}}\label{sec:structure}

\subsection{Counting common neighbours in $F_k(G)$}

For every $a,b \in V(G)$, let  \[\CN(a,b):=N_G(a)\cap N_G(b),\]
That is, $\CN(a,b)$ be the set of common neighbours
of $a$ and $b$ in $G$. The following result allow us to count the common neighbors of
vertices in $F_k(G)$.

\begin{lemma}\label{lem:cn}
Let $A,B\in V(F_k(G))$.
\begin{enumerate}[label=\textup{(\roman*)},leftmargin=2.6em]

\item\label{cn:2}
If $A\triangle B=\{a,b\}$ with $a\in A$ and $b\in B$, then
\begin{align*}
\CN(A,B) & =
\left \{(A\cap B) \cup\{c\} : c\in \CN(a,b)\setminus (A\cup B)\right \} \\
& \cup
\left \{((A\cup B) \setminus\{c\})\cup\{a,b\} : c\in \CN(a,b)\cap (A\cap B)\right \};
\end{align*}
therefore,
\[\left |\CN(A,B) \right | = \left | N_G(a)\cap N_G(b)\setminus\{a,b\}\right |.\]

\item\label{cn:4}
If $A\triangle B=\{a_1,a_2,b_1,b_2\}$ with $a_1,a_2\in A$ and $b_1,b_2\in B$, then
\[
|\CN(A,B)| =
2 [a_1b_1,a_2b_2\in E(G)] + 2 [a_1b_2,a_2b_1\in E(G)],
\]
where $[\cdot]$ is $1$ if both the stated edges are present and $0$ otherwise;
therefore,
\[|\CN(A,B)| \in \{0,2,4\}.\]

\item\label{cn:6}
If $|A\triangle B|\ge 6$ then $\CN(A,B)=\emptyset$.
\end{enumerate}
\end{lemma}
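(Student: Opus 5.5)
The plan is a direct analysis of what a common neighbour $C$ of $A$ and $B$ in $F_k(G)$ must look like. Since $C$ is adjacent to both, $C\triangle A=\{p,q\}$ with $pq\in E(G)$, and similarly $C\triangle B$ is an edge. Hence $A\triangle B=(A\triangle C)\triangle(C\triangle B)$ has size at most $4$. This immediately gives~\ref{cn:6}. It also shows $|A\triangle B|$ is even and nonzero whenever $A\ne B$, so only the cases $|A\triangle B|\in\{2,4\}$ remain.

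For~\ref{cn:2}, let $A\triangle B=\{a,b\}$ and $D:=A\cap B$, so $A=D\cup\{a\}$ and $B=D\cup\{b\}$. Since $C$ differs from each of $A,B$ by exactly one token move, $C$ is either $D\cup\{c\}$ with $c\notin A\cup B$, or $D\cup\{a,b\}\setminus\{c\}$ with $c\in D$. I would verify this by counting: $|C\triangle A|=|C\triangle B|=2$ with $|A\triangle B|=2$ forces $C\triangle A$ and $C\triangle B$ to share one element. That shared element is either a new vertex $c$ outside $A\cup B$ (added to $D$) or a vertex $c\in D$ removed from $A\cup B$.
\begin{itemize}
\item In the first case, adjacency to $A$ and $B$ means $ca,cb\in E(G)$.
\item In the second case, $C\triangle A=\{c,b\}$ and $C\triangle B=\{c,a\}$, so again $ca,cb\in E(G)$.
\end{itemize}
Conversely, every such $c$ gives a common neighbour. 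This yields the displayed set. The two families are disjoint: the first contains neither $a$ nor $b$, while the second contains both. The union of the index sets is $\CN(a,b)\setminus\{a,b\}$, which gives the count.

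For~\ref{cn:4}, let $A\triangle B=\{a_1,a_2,b_1,b_2\}$. The two edges $C\triangle A$ and $C\triangle B$ must then be disjoint and partition $A\triangle B$. This forces $C=(A\cap B)\cup\{a_i,b_j\}$ for some $i,j$, with $C\triangle A=\{a_{3-i},b_j\}$ and $C\triangle B=\{a_i,b_{3-j}\}$. So $C$ is a common neighbour iff $a_{3-i}b_j\in E(G)$ and $a_ib_{3-j}\in E(G)$. The four choices of $(i,j)$ pair up as follows:
\begin{itemize}
\item $(1,2)$ and $(2,1)$ both require exactly $a_1b_1,a_2b_2\in E(G)$;
\item $(1,1)$ and $(2,2)$ both require exactly $a_1b_2,a_2b_1\in E(G)$.
\end{itemize}
The four candidates are distinct sets, which gives the formula and $|\CN(A,B)|\in\{0,2,4\}$.

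The only step needing care is the case split in~\ref{cn:2}: showing the shared element must be either a fresh vertex or one in $A\cap B$, and checking that $a,b$ themselves never occur. These are routine set computations, so I expect no real obstacle.
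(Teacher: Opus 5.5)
Your proof is correct and follows essentially the same route as the paper: both fix a common neighbour $C$, determine its form from the two edges $A\triangle C$ and $B\triangle C$, and then check the converse inclusions. The only difference is bookkeeping. You use $A\triangle B=(A\triangle C)\triangle(C\triangle B)$ to decide at once whether these two edges share exactly one vertex or are disjoint, which also settles part (iii) immediately; the paper instead splits into cases according to which token of $A$ is moved to reach $C$.
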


\begin{proof}
Let $C \in \CN(A,B)$. Let $\{a',c_A\}:=A \triangle C$, so
 that $a' \in A$ and $c_A \in C$; likewise, let $\{b',c_B\}:=B \triangle C$, so
 that $b' \in B$ and $c_B \in C$. In particular note that $a'$ is adjacent to $c_A$,
 and $b'$ is adjacent to $c_B$.

 \begin{itemize}
  \item[\ref{cn:2}]
  First note that
\[\left \{(A\cap B) \cup\{c\} : c\in \CN(a,b)\setminus (A\cup B)\right \} \subseteq \CN(A,B)\]
and
\[\left \{((A\cup B) \setminus\{c\})\cup\{a,b\} : c\in \CN(a,b)\cap (A\cap B)\right \}
\subseteq \CN(A,B). \]
 Since $A\triangle B=\{a,b\}$, we have that $a'=a$ or $a \in A\cap B$.

 Suppose that $a'=a$. We have that $c_A\notin B$; otherwise, $C$ would be equal to $B$. Since $C$ is adjacent
  to $B$, we have that $C$ is obtained from $B$ by moving the token at $b$ to $c:=c_A=c_B$.
  This implies that $b$ is adjacent to $c$ and
 \[C \in \left \{(A\cap B) \cup\{c\} : c\in \CN(a,b)\setminus (A\cup B)\right \}.\]

 Suppose that  $a' \in A \cap B$. We have that $c_A=b$; otherwise, $\{C \triangle B\}=\{a,c_A,b,a'\}$, and $B$ would not be adjacent to $C$.
 This implies that $C$ is obtained from $B$ by moving the token at $a'$ to $a$. Thus, $a$ is adjacent to $a'$. Since
 $a'$ is adjacent to $c_A=b$. We have that
 \[C \in \left \{((A\cup B) \setminus\{c\})\cup\{a,b\} : c\in \CN(a,b)\cap (A\cap B)\right \}.\]

 \item[\ref{cn:4}]
 We have that $a' \in \{a_1,a_2\}$ and $c_A \in\{b_1,b_2\}$; otherwise $C$ would not be adjacent to $B$.
 Suppose that $a'=a_1$ and $c_a=b_1$. Since $C$ is adjacent to $B$ we have that $C$ is obtained
 from $B$ by moving the token at $b_2$ to $a_2$; thus,  $a_2$ is adjacent to $b_2$,
 and $(A \cap B) \cup \{a_1,b_2\} \in \CN (A,B)$.
 Similarly, if $a'=a_2$ and $c_a=b_2$, we have that $a_1$ is adjacent to $b_1$ and that
 $(A \cap B) \cup \{a_2,b_1\} \in \CN (A,B)$.

Suppose that $a'=a_1$ and $c_a=b_2$. Since $C$ is adjacent to $B$ we have that $C$ is obtained
 from $B$ by moving the token at $b_1$ to $a_2$; thus,  $a_2$ is adjacent to $b_1$,
 and $(A \cap B) \cup \{a_1,b_1\} \in \CN (A,B)$.
 Similarly, if $a'=a_2$ and $c_a=b_1$, we have that $a_1$ is adjacent to $b_2$ and that
 $(A \cap B) \cup \{a_2,b_2\} \in \CN (A,B)$.

 \item[\ref{cn:6}]
 Note that since $|A \triangle B| \ge 6$ and $C$ is adjacent to $B$ we have that
 $|C \triangle B| \ge 4$. This is a contradiction to the assumption that $B$ and $C$ are adjacent.
 Therefore, $A$ and $B$ do not have common neighbours.
 \end{itemize}
\end{proof}

%
%
%
%
\subsection{Proofs of~\ref{m:xy} and~\ref{m:vw}  of Theorem~\ref{thm:G}}
  Let $\{u_1,z_1\}:=e_1$ and $\{u_2,z_2\}:=e_2$, so that
 \begin{itemize}
  \item $\varphi(X \cup \{u_1,u_2\})=A_1'=X'\cup \{x',y'\}$,
  \item $\varphi(X \cup \{u_1,z_2\})=A_2'=X'\cup \{y',w'\}$,
  \item $\varphi(X \cup \{z_1,z_2\})=A_3'=X'\cup \{v',w'\}$, and
  \item $\varphi(X \cup \{z_1,u_2\})=A_4'=X'\cup \{y',v'\}$.
 \end{itemize}

 Since, $\CN(A_1',A_3')=\{A_2',A_4',A_5',A_6'\}$ and $\varphi^{-1}$ is an isomorphism, we have
 that \[|\CN(X \cup \{u_1,u_2\},X \cup \{z_1,z_2\})|=4;\] by \ref{cn:4} of Lemma~ \ref{lem:cn},
 \begin{itemize}
  \item $u_1z_2,u_2z_1 \in E(G)$, and
  \item $\{\varphi(X \cup \{u_1,z_1\}), \varphi(X \cup \{u_2,z_2\})\}=\{A_5',A_6'\}$.
 \end{itemize}



\begin{itemize}
 \item If $\varphi(X \cup \{u_1, z_1\})=X'\cup \{x', v'\}$ and
 $\varphi(X \cup \{u_2, z_2\})=X'\cup \{x', w'\}$, then let
 \[\varphi^\ast:=\varphi, \quad k^\ast:=k, \quad x:=u_1, \quad v:=z_1, \quad y:=u_2, \quad w:=z_2\]
 and 
 \[X^\ast:=X.\]
 
 \item If $\varphi(X \cup \{u_1, v_1\})=X'\cup \{x', w'\}$ and
  $\varphi(X \cup \{u_2, v_2\})=X'\cup \{x', v'\}$, then let
 \[\varphi^\ast=\varphi \circ \mathfrak{c}, \quad k^\ast=|G|-k, \quad x:=z_1, \quad v:=u_1, \quad y:=z_2, \quad w:=u_2\]
 and
 \[ X^\ast:=V(G)\setminus( X \cup \{u_1, u_2, v_1, v_2\} ).\]
\end{itemize}
Note that $\varphi^\ast$ is an isomorphism from $F_{k^\ast}(G)$ to $F_{k'}(G')$. Since $u_1z_2,u_2z_1 \in E(G)$, in either case we have

\begin{quote}
 \ref{m:vw} $xw,yv \in E(G)$;
\end{quote}

Let 
\begin{equation}\label{eq:As}
\begin{aligned}
A_1  &:= X^* \cup \{x,y\},      &\quad
A_2  &:= X^* \cup \{x,w\},      &\quad
A_3  &:= X^* \cup \{v,w\},\\
A_4  &:= X^* \cup \{v,y\},      &\quad
A_5  &:= X^* \cup \{x,v\},      &\quad
A_6  &:= X^* \cup \{y,w\}.
\end{aligned}
\end{equation}

\begin{figure}
	\centering
	\includegraphics[width=0.9\textwidth]{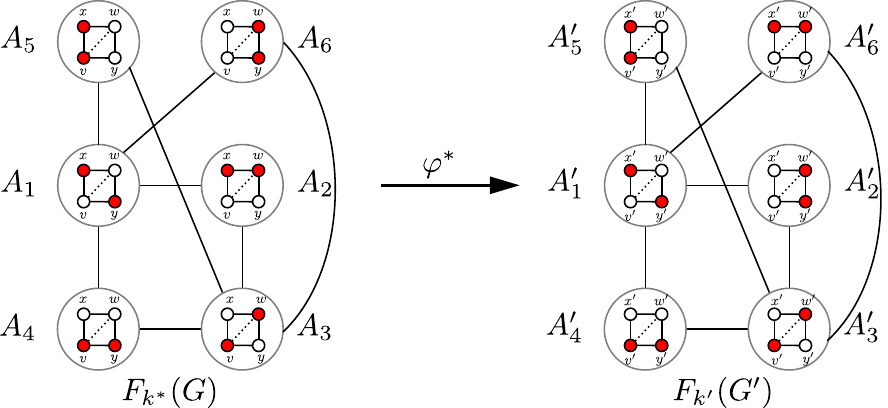}
	\caption{The dashed edges are forbidden}\label{fig:cycles}
\end{figure}

Note that
\[\varphi^\ast(A_i)=A_i'\]
for every $i=1,\dots,6.$
Since $A_5'$ is not adjacent to $A_2'$, $A_5$ is not adjacent to $A_2$. This implies that
\[v \not \sim w.\]

We have the situation depicted in Figure~\ref{fig:cycles}. In particular
$A_5$ is adjacent to $A_4$ if and only if $x$ is adjacent to $y$ in $G$; 
and $A_5'$ is adjacent to $A_4'$ if and only if $x'$ is adjacent to $y'$ in $G$.
Since $A_5$ is adjacent to $A_4$ if and only if $A_5'$ is adjacent to $A_4'$, we have 

\begin{quote}
\ref{m:xy}  \emph{$x$ is adjacent to $y$ in $G$, if and only if, $x'$ is adjacent to $y'$ in $G'$.}
\end{quote}


\subsection{Symmetries of the configuration}

In the course of proving~\ref{m:xy} and~\ref{m:vw} we have uncovered various
structural properties of $G$, $G'$ and $\varphi^\ast$. These come with
symmetries: we could swap the labels of $x$ and $y$ (and the corresponding
labels of the $A_i$) and the argument would still hold; we could invert
$\varphi^\ast$ and repeat the arguments on the primed side; and we could pass to
complements, replacing $k^\ast$ by $|G|-k^\ast$. This is not accidental --- we
now formalize these three symmetries, since they let us significantly reduce the
case analysis in what follows.

We call the data
\[
  \bigl(\varphi^\ast;\ G,k^\ast;\ G',k';\ x,y,v,w;\ x',y',v',w';\ X^\ast,X'\bigr)
\]
fixed above the \emph{standing configuration}, where $\varphi^\ast\colon
F_{k^\ast}(G)\to F_{k'}(G')$ and the tuples $A_1,\dots,A_6$ and
$A_1',\dots,A_6'$ of~\eqref{eq:As} and its primed analogue satisfy
$\varphi^\ast(A_i)=A_i'$ for $i=1,\dots,6$.

\begin{lemma}\label{lem:symmetries}
Each of the following assignments yields an isomorphism $\widehat{\varphi^\ast}$, vertices
$\widehat x,\widehat y,\widehat v,\widehat w$,
$\widehat x',\widehat y',\widehat v',\widehat w'$, sets
$\widehat X,\widehat X'$, and tuples $\widehat A_i,\widehat A_i'$ that obey the
formulas of~\eqref{eq:As} and its primed analogue in the hatted vertices, and
that satisfy $\widehat{\varphi^\ast}(\widehat A_i)=\widehat A_i'$ for $i=1,\dots,6$.

\begin{enumerate}[label=\textup{(\roman*)},leftmargin=2.6em]

\item\label{sym:inv}\emph{(Inversion.)}
  With $\sigma=(2\,6)$, set $\widehat{\varphi^\ast}:=(\varphi^\ast)^{-1}$,
  \[
    \widehat A_i:=A'_{\sigma(i)},\qquad
    \widehat A_i':=A_{\sigma(i)},
  \]
  \[
    (\widehat x,\widehat y,\widehat v,\widehat w,\widehat X)
      :=(x',y',v',w',X'),\qquad
    (\widehat x',\widehat y',\widehat v',\widehat w',\widehat X')
      :=(x,y,v,w,X^\ast).
  \]
  Here $\widehat{\varphi^\ast}\colon F_{k'}(G')\to F_{k^\ast}(G)$, and the roles of the primed
  and unprimed data are interchanged.

\item\label{sym:swap}\emph{($x$--$y$ swap.)}
  With $\tau=(2\,6)(4\,5)$, set $\widehat{\varphi^\ast}:=\varphi^\ast$,
  \[
    \widehat A_i:=A_{\tau(i)},\qquad
    \widehat A_i':=A'_{\tau(i)},
  \]
  \[
    (\widehat x,\widehat y,\widehat v,\widehat w,\widehat X)
      :=(y,x,v,w,X^\ast),\qquad
    (\widehat x',\widehat y',\widehat v',\widehat w',\widehat X')
      :=(y',x',v',w',X').
  \]

\item\label{sym:comp}\emph{(Complementation.)}
  With $\pi=(1\,3)(2\,5)(4\,6)$, set
  $\widehat{\varphi^\ast}:=\mathfrak{c}\circ\varphi^\ast\circ\mathfrak{c}$,
  \[
    \widehat A_i:=\mathfrak{c}\bigl(A_{\pi(i)}\bigr),\qquad
    \widehat A_i':=\mathfrak{c}\bigl(A'_{\pi(i)}\bigr),
  \]
  \[
    (\widehat x,\widehat y,\widehat v,\widehat w):=(y,x,v,w),\qquad
    (\widehat x',\widehat y',\widehat v',\widehat w'):=(x',y',v',w'),
  \]
  \[
    \widehat X:=V(G)\setminus\bigl(X^\ast\cup\{x,y,v,w\}\bigr),\qquad
    \widehat X':=V(G')\setminus\bigl(X'\cup\{x',y',v',w'\}\bigr).
  \]
  Here $\widehat{\varphi^\ast}\colon F_{\,|G|-k^\ast}(G)\to F_{\,|G'|-k'}(G')$.
\end{enumerate}
\end{lemma}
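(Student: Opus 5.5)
The plan is a direct verification, one symmetry at a time. For each of \ref{sym:inv}, \ref{sym:swap}, \ref{sym:comp} I check two things. First, $\widehat{\varphi^\ast}$ is an isomorphism between the stated token graphs. Second, each of the twelve hatted sets equals the set prescribed by~\eqref{eq:As} (or its primed analogue) in the hatted vertices, and $\widehat{\varphi^\ast}(\widehat A_i)=\widehat A_i'$ holds for $i=1,\dots,6$. The one general fact needed is that, for any graph $H$ and any $j$, complementation $\mathfrak{c}$ is an isomorphism $F_j(H)\to F_{|H|-j}(H)$. This holds because $A\triangle B=\mathfrak{c}(A)\triangle\mathfrak{c}(B)$. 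With this, $(\varphi^\ast)^{-1}$ and $\mathfrak{c}\circ\varphi^\ast\circ\mathfrak{c}$ are isomorphisms between the claimed token graphs. For the compositions, the first $\mathfrak{c}$ acts on $G$ and the second on $G'$.

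\emph{Inversion.} Since $\sigma=(2\,6)$, only the indices $2$ and $6$ need checking; all other indices match verbatim. Take $\widehat A_2=A_6'=X'\cup\{x',w'\}$: this is exactly $\widehat X\cup\{\widehat x,\widehat w\}$, as the unprimed formula for $A_2$ requires. Likewise $\widehat A_6=A_2'=X'\cup\{y',w'\}=\widehat X\cup\{\widehat y,\widehat w\}$. On the other side, $\widehat A_2'=A_6=X^\ast\cup\{y,w\}$ is the primed formula $\widehat X'\cup\{\widehat w',\widehat y'\}$. Similarly, $\widehat A_6'=A_2=X^\ast\cup\{x,w\}=\widehat X'\cup\{\widehat x',\widehat w'\}$. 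Finally, $(\varphi^\ast)^{-1}(A'_{\sigma(i)})=A_{\sigma(i)}$ is immediate from $\varphi^\ast(A_j)=A_j'$.

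\emph{Swap.} The map is unchanged, so $\widehat{\varphi^\ast}(\widehat A_i)=\varphi^\ast(A_{\tau(i)})=A'_{\tau(i)}=\widehat A'_i$ holds trivially. It remains to check the formulas with $x,y$ and $x',y'$ interchanged. For example, $\widehat A_2=\widehat X\cup\{\widehat x,\widehat w\}=X^\ast\cup\{y,w\}=A_6$ and $\widehat A_4=\widehat X\cup\{\widehat v,\widehat y\}=X^\ast\cup\{v,x\}=A_5$. The primed side is analogous: $\widehat A_2'=X'\cup\{w',x'\}=A_6'$ and $\widehat A_4'=X'\cup\{v',x'\}=A_5'$. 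Indices $1$ and $3$ are symmetric in $x,y$, and the remaining indices follow because $\tau$ is an involution.

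\emph{Complementation.} This is the only step with real content, since the complement reshuffles indices differently on the two sides: $x$ and $y$ are swapped in $G$ but not in $G'$. The key observation is that $\mathfrak{c}(X^\ast\cup\{a,b\})=\widehat X\cup(\{x,y,v,w\}\setminus\{a,b\})$. Hence complementing a pair in~\eqref{eq:As} produces the complementary pair of $\{x,y,v,w\}$ over $\widehat X$, and similarly on the primed side. Using this, I compute $\mathfrak{c}(A_{\pi(i)})$ for each $i$. For instance, $\mathfrak{c}(A_5)=\widehat X\cup\{y,w\}=\widehat X\cup\{\widehat x,\widehat w\}$ and $\mathfrak{c}(A_6)=\widehat X\cup\{x,v\}=\widehat X\cup\{\widehat v,\widehat y\}$. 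I then match each against the formula for $\widehat A_i$ with $(\widehat x,\widehat y)=(y,x)$. On the primed side, $\mathfrak{c}(A_5')=\widehat X'\cup\{y',w'\}$ and $\mathfrak{c}(A_6')=\widehat X'\cup\{y',v'\}$ must match the primed formulas with $x',y'$ kept in place; the cases $i=1,3,5,6$ are similar. Finally, $\widehat{\varphi^\ast}(\widehat A_i)=\mathfrak{c}\varphi^\ast\mathfrak{c}\mathfrak{c}(A_{\pi(i)})=\mathfrak{c}(A'_{\pi(i)})=\widehat A_i'$ because $\mathfrak{c}$ is an involution.

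The main obstacle is bookkeeping: keeping straight that the same permutation $\pi$ must work on both sides even though the vertex relabelling differs, since only the unprimed side swaps $x$ and $y$. I would tabulate all six indices on both sides to confirm it.
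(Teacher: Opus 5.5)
Your proposal is correct and follows the same approach as the paper: a direct substitution check that the hatted tuples match the templates of~\eqref{eq:As} and its primed analogue. The relation $\widehat{\varphi^\ast}(\widehat A_i)=\widehat A_i'$ is then read off from $\varphi^\ast(A_j)=A_j'$, using $\mathfrak{c}\circ\mathfrak{c}=\mathrm{id}$ in the complementation case. Your explicit justification that $\mathfrak{c}$ preserves symmetric differences, and your index checks for the swap and complementation cases, supply exactly the details the paper dismisses as ``identical or analogous.''
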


\begin{proof}
In each case $\widehat{\varphi^\ast}$ is an isomorphism between the stated token graphs, and
$\widehat{\varphi^\ast}(\widehat A_i)=\widehat A_i'$ follows from $\varphi^\ast(A_j)=A_j'$: for
\ref{sym:inv} it reads $(\varphi^\ast)^{-1}(A'_{\sigma(i)})=A_{\sigma(i)}$; for
\ref{sym:swap}, $\varphi^\ast(A_{\tau(i)})=A'_{\tau(i)}$; and for \ref{sym:comp},
\[
  \widehat{\varphi^\ast}\bigl(\mathfrak{c}(A_{\pi(i)})\bigr)
    =\mathfrak{c}\bigl(\varphi^\ast(A_{\pi(i)})\bigr)
    =\mathfrak{c}\bigl(A'_{\pi(i)}\bigr).
\]
Substituting the definitions of the $A_i$ and $A_i'$ shows that
$\widehat A_i,\widehat A_i'$ match the template in the hatted vertices; we check
the two moved indices of \ref{sym:inv}, the rest being identical or analogous.
For $\sigma=(2\,6)$,
\[
  \widehat A_2=A_6'=X'\cup\{x',w'\}=\widehat X\cup\{\widehat x,\widehat w\},
  \qquad
  \widehat A_6=A_2'=X'\cup\{y',w'\}=\widehat X\cup\{\widehat y,\widehat w\},
\]
while $\widehat A_i=A_i'$ agrees with the template for $i\in\{1,3,4,5\}$.
\end{proof}

Each assignment sends the standing configuration to data satisfying the same
relations, and the adjacencies among $\widehat x,\widehat y,\widehat v,\widehat w$
(and among their primed images) match those among $x,y,v,w$; in particular
$\widehat x\widehat v,\widehat y\widehat w,\widehat x\widehat w,\widehat y\widehat v$
are edges, $\widehat v\widehat w$ is a non-edge, and $\widehat x\widehat y$ is an
edge if and only if $xy$ is. Consequently, any conclusion drawn from the
standing configuration together with whichever of~\ref{m:xy}--\ref{m:comp} are
established at the point of use transfers verbatim to the hatted data. We use
this to apply an argument to $(\varphi^\ast)^{-1}$ with the roles of $G$ and $G'$
interchanged~(\ref{sym:inv}); to interchange $x$ with $y$ and $x'$ with
$y'$~(\ref{sym:swap}); and to replace $k^\ast$ by $|G|-k^\ast$ and pass to
complements~(\ref{sym:comp}).

\subsection{Proof of~\ref{m:CN} of Theorem~\ref{thm:G}}

By the inversion symmetry~\ref{sym:inv} of Lemma~\ref{lem:symmetries}, which
interchanges the primed and unprimed data, it suffices to prove that
\[N_{G'}(x')\setminus\{y'\}=N_{G'}(y')\setminus \{x'\};\]
the statement $N_{G}(x)\setminus\{y\}=N_{G}(y)\setminus \{x\}$ then follows by
applying the same argument to $(\varphi^\ast)^{-1}$.

For a contradiction, suppose that
\[
  N_{G'}(x')\setminus\{y'\}
  \ne
  N_{G'}(y')\setminus\{x'\}.
\]
Choose
\[
  b'\in
  \bigl(N_{G'}(x')\setminus\{y'\}\bigr)
  \triangle
  \bigl(N_{G'}(y')\setminus\{x'\}\bigr).
\]
If
\[
  b'\in
  N_{G'}(y')\setminus\bigl(N_{G'}(x')\cup\{x'\}\bigr),
\]
we apply the $x$--$y$ swap~\ref{sym:swap} of Lemma~\ref{lem:symmetries},
which interchanges $x'$ with $y'$ (and $x$ with $y$) while preserving the
standing configuration and the already-established
statements~\ref{m:xy} and~\ref{m:vw}; this turns the present case into the
other one. Thus, after this swap if necessary, we may assume that
\begin{equation*}
  b'\in
  N_{G'}(x')
  \setminus
  \bigl(N_{G'}(y')\cup\{y'\}\bigr).
\end{equation*}


We may further assume that
\begin{equation}\label{eq:bnotX}
  b'\notin X'.
\end{equation}
Indeed, suppose that $b'\in X'$. We apply the complementation~\ref{sym:comp}
of Lemma~\ref{lem:symmetries}, which replaces $k^\ast$ by $|G|-k^\ast$ and
passes to complements via
$\mathfrak{c}\circ\varphi^\ast\circ\mathfrak{c}$, while preserving the
standing configuration and the already-established statements~\ref{m:xy}
and~\ref{m:vw}. Its new set is
\[
  \widehat X'=V(G')\setminus\bigl(X'\cup\{x',y',v',w'\}\bigr),
\]
so $b'\notin\widehat X'$, since $b'\in X'$; and since the primed vertices
$x',y',v',w'$ are left unchanged,
\[
  b'\in
  N_{G'}(x')
  \setminus\bigl(N_{G'}(y')\cup\{y'\}\bigr)
\]
still holds. Replacing the original data by the complemented data, we may
assume~\eqref{eq:bnotX}.


Let
 \[B_1':=\{b',y'\} \cup X', \quad B_2':=\{b', w'\} \cup X', \quad B_3':=\{b',x'\} \cup X' \textrm{ and } B_4':=\{b',v'\} \cup X';\]
and let
 \[B_i:={\varphi^\ast}^{-1}(B_i')\]
 for $i=1,\dots,4$. We have the situation depicted in Figure~\ref{fig:second-cycle}.

\begin{figure}
  \centering
  \includegraphics[width=0.7\textwidth]{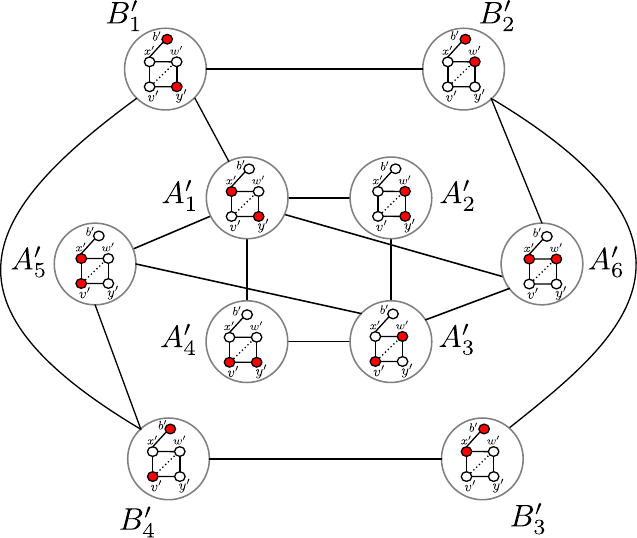}
  \caption{Secondary $4$-cycle}
  \label{fig:second-cycle}
\end{figure}
Note that $B_3'\not\sim A_1'$, since
$B_3'\triangle A_1'=\{b',y'\}$ and $b'\not\sim y'$. Moreover,
\[
  |B_3'\triangle A_i'|=4
  \qquad\text{for }i=2,3,4.
\]
Therefore,
\begin{equation}
		\label{eq:vertex-B3}
		\textrm{ $B_3'$ is not adjacent to any of $A_1',A_2',A_3',A_4'$.}
	\end{equation}

Let $a, b \in V(G)$ such that $B_1$ is obtained from $A_1$ by moving a token from $a$ to $b$.
\begin{itemize}
  \item \emph{Suppose that $a = x$.}

    Thus, \[B_1 = (A_1 \setminus \{x\}) \cup \{b\},\] with $b \in V(G) \setminus \{x, y, v, w\}$.

    Note that $B_2 \in \CN(B_1,A_6)$ and $B_1 \triangle A_6=\{b,w\}$;
    By \ref{cn:2} of Lemma~\ref{lem:cn}, we have
    that there exists $u\in \CN(W,b)$ such that either
    \[B_2 = (A_6 \setminus \{w\}) \cup \{u\} \textrm{ or } B_2=(A_6\setminus \{u\}) \cup \{b\}. \]

  Note that $B_4 \in \CN(B_1,A_5)$, $B_1 \triangle A_5=\{b,y,x,v\}$, $A_1=(B_1 \setminus \{b\}) \cup \{x\}$ and $A_4=(B_1 \setminus \{b\}) \cup \{v\}$; by \ref{cn:4} of Lemma~\ref{lem:cn}, we have
  that \[B_4 = (B_1 \setminus \{y\}) \cup \{v\} \textrm{ or }B_4 = (B_1 \setminus \{y\}) \cup \{x\}.\]

  \begin{itemize}
  \item \emph{Suppose that $B_2 = (A_6 \setminus \{w\}) \cup \{u\}$.}

  \begin{itemize}
   \setlength{\itemsep}{1em}
    \item\emph{Suppose that $B_4 = (B_1 \setminus \{y\}) \cup \{v\}$.}

    Note that $B_3 \in \CN(B_2,B_4)$,  $B_2 \triangle B_4 = \{u,y,b,v\}$, $B_1= (B_2 \setminus \{u\}) \cup \{b\}$
    $A_4=(B_2 \setminus \{u\}) \cup \{v\}$ and $A_4\not \sim B_4$. By \ref{cn:4} of Lemma~\ref{lem:cn}, we have that
      \[B_3=(B_2 \setminus \{y\}) \cup \{v\}.\]
    Therefore, $B_3$ is adjacent to $A_3$, contradicting~\eqref{eq:vertex-B3}.

    \item \emph{Suppose that $B_4 = (B_1 \setminus \{y\}) \cup \{x\}$.}

    Note that $B_3 \in CN(B_2,B_4)$, $B_2 \triangle B_4 = \{u,y,b,x\}$, $B_1= (B_2 \setminus \{u\}) \cup \{b\}$
    $A_1=(B_2 \setminus \{u\}) \cup \{x\}$ and $A_1\not \sim B_1$; by \ref{cn:4} of Lemma~\ref{lem:cn}, we have
    that
    \[B_3=(B_2 \setminus \{y\}) \cup \{x\}.\]
    Therefore, $B_3$ is adjacent to $A_2$, contradicting~\eqref{eq:vertex-B3}.
  \end{itemize}

  \item  \emph{Suppose that $B_2=(A_6\setminus \{u\}) \cup \{b\}$.}

  \begin{itemize}
   \setlength{\itemsep}{1em}

   \item \emph{Suppose that $B_4 = (B_1 \setminus \{y\}) \cup \{v\}$.}

    Note that $B_3 \in \CN(B_2,B_4)$, $B_2 \triangle B_4=\{w,y,v,u\}$, and $v\not\sim w$. By \ref{cn:4} of Lemma~\ref{lem:cn}, we have that $B_3$ is equal to one of
    \[
      (B_2 \setminus \{w\}) \cup \{u\} \quad \textrm{ or }
      (B_2 \setminus \{y\}) \cup \{v\}
    \]
    However, $A_1$ is obtained from $(B_2 \setminus \{w\}) \cup \{u\} $ by moving the token at
    $b$ to $x$, and $A_3$ is obtained from $(B_2 \setminus \{y\}) \cup \{v\}$ by moving the token
    at $b$ to $u$. In both cases we contradict~\eqref{eq:vertex-B3}.

  \item \emph{Suppose that $B_4 = (B_1 \setminus \{y\}) \cup \{x\}$.}

    Note that $B_3 \in \CN(B_2,B_4)$ and $B_2 \triangle B_4=\{w,y,x,u\}$. By \ref{cn:4} of Lemma~\ref{lem:cn}, we
    have that $B_3$ is equal to one of
    \[
      (B_2 \setminus \{w\}) \cup \{x\}, \quad
      (B_2 \setminus \{w\}) \cup \{u\}, \quad
      (B_2 \setminus \{y\}) \cup \{x\}, \quad
      (B_2 \setminus \{y\}) \cup \{u\}.
    \]
    However, the first two vertices are adjacent to $A_1$, and the last two vertices are adjacent to $A_2$.
    In each case, this contradicts~\eqref{eq:vertex-B3}.
  \end{itemize}
\end{itemize}

  \item \emph{Suppose that $a=y$.}

    Thus,
    \[
      B_1=(A_1\setminus\{y\})\cup\{b\},
    \]
    where $b\in V(G)\setminus\{x,y,v,w\}$; in particular, $b\sim y$.

    Note that $B_2\in\CN(B_1,A_6)$,
    $B_1\triangle A_6=\{x,b,y,w\}$,
    $A_1=(B_1\setminus\{b\})\cup\{y\}$, and
    $A_2=(B_1\setminus\{b\})\cup\{w\}$.
    Since $B_2\notin\{A_1,A_2\}$, by \ref{cn:4} of
    Lemma~\ref{lem:cn},
    \[
      B_2=(B_1\setminus\{x\})\cup\{w\}
      \quad\text{or}\quad
      B_2=(B_1\setminus\{x\})\cup\{y\}.
    \]
    Observe also that \[x\not\sim b.\] Indeed, in the first alternative
    $A_2\not\sim B_2$, since $A_2'\not\sim B_2'$, whereas in the
    second alternative $A_1\not\sim B_2$, since
    $A_1'\not\sim B_2'$.

    Note also that $B_4\in\CN(B_1,A_5)$ and
    $B_1\triangle A_5=\{b,v\}$. By \ref{cn:2} of
    Lemma~\ref{lem:cn}, there exists $u\in\CN(v,b)$ such that
    \[
      B_4=(A_5\setminus\{v\})\cup\{u\}
      \quad\text{or}\quad
      B_4=(A_5\setminus\{u\})\cup\{b\}.
    \]

    \begin{itemize}
    \setlength{\itemsep}{1em}
      \item \emph{Suppose that
      $B_4=(A_5\setminus\{v\})\cup\{u\}$.}

        Since $B_4\notin\{A_1,A_2\}$, we have $u\notin\{y,w\}$.
        \begin{itemize}
          \item \emph{Suppose that
          $B_2=(B_1\setminus\{x\})\cup\{w\}$.}

            Note that $B_1,B_3\in\CN(B_2,B_4)$,
            $B_2\triangle B_4=\{b,w,x,u\}$,
            $B_1=(B_4\setminus\{u\})\cup\{b\}$, $A_2=(B_4\setminus\{u\})\cup\{w\}$.
            Since $x\not\sim b$, by \ref{cn:4} of
            Lemma~\ref{lem:cn},
            \[
              B_3=(B_4\setminus\{x\})\cup\{w\}.
            \]
            Since $u\sim v$, we have $B_3\sim A_3$,
            contradicting~\eqref{eq:vertex-B3}.

          \item \emph{Suppose that
          $B_2=(B_1\setminus\{x\})\cup\{y\}$.}

            Note that $B_1,B_3\in\CN(B_2,B_4)$,
            $B_2\triangle B_4=\{b,y,x,u\}$,
            $B_1=(B_4\setminus\{u\})\cup\{b\}$ and
            $A_1=(B_4\setminus\{u\})\cup\{y\}$.
            Since $x\not\sim b$, by \ref{cn:4} of
            Lemma~\ref{lem:cn},
            \[
              B_3=(B_4\setminus\{x\})\cup\{y\}.
            \]
            Since $u\sim v$, we have $B_3\sim A_4$,
            contradicting~\eqref{eq:vertex-B3}.
        \end{itemize}

      \item \emph{Suppose that
      $B_4=(A_5\setminus\{u\})\cup\{b\}$.}

        In this case, we have that \[u\in X^\ast.\]
        \begin{itemize}
          \item \emph{Suppose that
          $B_2=(B_1\setminus\{x\})\cup\{w\}$.}

            Note that $B_3\in\CN(B_2,B_4)$ and
            $B_2\triangle B_4=\{x,v,u,w\}$. By \ref{cn:4} of
            Lemma~\ref{lem:cn}, $B_3$ is equal to one of
            \[
              (B_4\setminus\{v\})\cup\{w\},\quad
              (B_4\setminus\{v\})\cup\{u\},\quad
              (B_4\setminus\{x\})\cup\{w\},\quad
              (B_4\setminus\{x\})\cup\{u\}.
            \]
            These four vertices are adjacent to $A_2$, $A_1$, $A_3$,
            and $A_4$, respectively, contradicting~\eqref{eq:vertex-B3}.

          \item \emph{Suppose that
          $B_2=(B_1\setminus\{x\})\cup\{y\}$.}

            Note that $B_3\in\CN(B_2,B_4)$ and
            $B_2\triangle B_4=\{x,v,u,y\}$. By \ref{cn:4} of
            Lemma~\ref{lem:cn}, $B_3$ is equal to one of
            \[
              (B_4\setminus\{v\})\cup\{y\},\quad
              (B_4\setminus\{v\})\cup\{u\},\quad
              (B_4\setminus\{x\})\cup\{y\},\quad
              (B_4\setminus\{x\})\cup\{u\}.
            \]
            The first two vertices are adjacent to $A_1$, and the
            last two are adjacent to $A_4$. In every case, this
            contradicts~\eqref{eq:vertex-B3}.
        \end{itemize}
    \end{itemize}

  \item \emph{Suppose that $a\notin\{x,y\}$.}

    Thus, $a\in X^\ast$ and
    \[
      B_1=(A_1\setminus\{a\})\cup\{b\},
    \]
    where $b\notin X^\ast\cup\{x,y\}$; in particular, $a\sim b$.
    Since  $B_3'\notin\{A_1',A_2',A_3',A_4'\}$, we have that
    \[B_3\notin\{A_1,A_2,A_3,A_4\}.\]
    We divide the proof
    according to whether $b=v$, $b=w$, or $b\notin\{v,w\}$.

    \begin{itemize}
    \setlength{\itemsep}{1em}
      \item \emph{Suppose that $b=v$.}

        In this case,
        \[
          B_1=(A_1\setminus\{a\})\cup\{v\}.
        \]
        Note that $B_2\in\CN(B_1,A_6)$,
        $B_1\triangle A_6=\{x,v,a,w\}$,
        $A_1=(B_1\setminus\{v\})\cup\{a\}$,
        and $v\not\sim w$.
        By \ref{cn:4} of Lemma~\ref{lem:cn},
        \[
          B_2=(B_1\setminus\{x\})\cup\{w\}.
        \]

        Note also that $B_4\in\CN(B_1,A_5)$ and
        $B_1\triangle A_5=\{y,a\}$. By \ref{cn:2} of
        Lemma~\ref{lem:cn}, there exists $u\in\CN(y,a)$ such that
        \[
          B_4=(B_1\setminus\{y\})\cup\{u\}
          \quad\text{or}\quad
          B_4=(B_1\setminus\{u\})\cup\{a\}.
        \]

        \begin{itemize}
          \item \emph{Suppose that
          $B_4=(B_1\setminus\{y\})\cup\{u\}$.}

            Since $B_4'\not\sim A_2'$, we have $B_4\not\sim A_2$,
            and hence $u\ne w$.
            Note that $B_3\in\CN(B_2,B_4)$ and
            $B_2\triangle B_4=\{w,y,x,u\}$. By \ref{cn:4} of
            Lemma~\ref{lem:cn}, $B_3$ is equal to one of
            \[
              (B_4\setminus\{u\})\cup\{w\},\quad
              (B_4\setminus\{u\})\cup\{y\},\quad
              (B_4\setminus\{x\})\cup\{w\},\quad
              (B_4\setminus\{x\})\cup\{y\}.
            \]
            These four vertices are adjacent to $A_2$, $A_1$, $A_3$,
            and $A_4$, respectively, contradicting~\eqref{eq:vertex-B3}.

          \item \emph{Suppose that
          $B_4=(B_1\setminus\{u\})\cup\{a\}$.}

            Since $B_4\notin\{A_1,A_4,A_5\}$, we have
            $u\in X^\ast\setminus\{a\}$. Moreover,
            $A_4\not\sim B_4$, since $A_4'\not\sim B_4'$; hence
            $u\not\sim x$. Note that $B_3\in\CN(B_2,B_4)$,
            $B_2\triangle B_4=\{w,u,a,x\}$, $x\sim w$, and
            $a\sim u$. By \ref{cn:4} of Lemma~\ref{lem:cn},
            \[
              B_3=(B_2\setminus\{w\})\cup\{x\}
              \quad\text{or}\quad
              B_3=(B_2\setminus\{u\})\cup\{a\}.
            \]
            The first vertex is adjacent to $A_1$, and the second is
            adjacent to $A_3$, since $u\sim y$. In either case, this
            contradicts~\eqref{eq:vertex-B3}.
        \end{itemize}

      \item \emph{Suppose that $b=w$.}

        In this case,
        \[
          B_1=(A_1\setminus\{a\})\cup\{w\}.
        \]
        Note that $B_4\in\CN(B_1,A_5)$,
        $B_1\triangle A_5=\{y,w,a,v\}$,
        $A_1=(B_1\setminus\{w\})\cup\{a\}$,
         and $v\not\sim w$.
        By \ref{cn:4} of Lemma~\ref{lem:cn},
        \[
          B_4=(B_1\setminus\{y\})\cup\{v\}.
        \]

        Note also that $B_2\in\CN(B_1,A_6)$ and
        $B_1\triangle A_6=\{x,a\}$. By \ref{cn:2} of
        Lemma~\ref{lem:cn}, there exists $u\in\CN(x,a)$ such that
        \[
          B_2=(B_1\setminus\{x\})\cup\{u\}
          \quad\text{or}\quad
          B_2=(B_1\setminus\{u\})\cup\{a\}.
        \]

        \begin{itemize}
          \item \emph{Suppose that
          $B_2=(B_1\setminus\{x\})\cup\{u\}$.}

            Since $B_2'\not\sim A_4'$, we have $B_2\not\sim A_4$,
            and hence $u\ne v$; otherwise,
            $B_2\triangle A_4=\{w,a\}$ and $a\sim w$.
            Note that $B_3\in\CN(B_2,B_4)$ and
            $B_2\triangle B_4=\{y,u,x,v\}$. By \ref{cn:4} of
            Lemma~\ref{lem:cn}, $B_3$ is equal to one of
            \[
              (B_2\setminus\{y\})\cup\{x\},\quad
              (B_2\setminus\{y\})\cup\{v\},\quad
              (B_2\setminus\{u\})\cup\{x\},\quad
              (B_2\setminus\{u\})\cup\{v\}.
            \]
            These four vertices are adjacent to $A_2$, $A_3$, $A_1$,
            and $A_4$, respectively, contradicting~\eqref{eq:vertex-B3}.

          \item \emph{Suppose that
          $B_2=(B_1\setminus\{u\})\cup\{a\}$.}

            Since $B_2\notin\{A_1,A_2,A_6\}$, we have
            $u\in X^\ast\setminus\{a\}$. Moreover,
            $B_4\not\sim A_2$, since $B_4'\not\sim A_2'$; hence
            $a\not\sim v$. Note that $B_3\in\CN(B_2,B_4)$ and
            $B_2\triangle B_4=\{a,y,u,v\}$.
            By \ref{cn:4} of Lemma~\ref{lem:cn},
            \[
              B_3=(B_2\setminus\{y\})\cup\{v\}.
            \]
            This vertex is adjacent to $A_3$, contradicting~\eqref{eq:vertex-B3}.
        \end{itemize}

      \item \emph{Suppose that $b\notin\{v,w\}$.}

        Note that $B_4\in\CN(B_1,A_5)$,
        $B_1\triangle A_5=\{y,b,a,v\}$, and
        $A_1=(B_1\setminus\{b\})\cup\{a\}$. By \ref{cn:4} of
        Lemma~\ref{lem:cn}, $B_4$ is equal to one of
        \[
          (B_1\setminus\{y\})\cup\{v\},\quad
          (B_1\setminus\{y\})\cup\{a\},\quad
          (B_1\setminus\{b\})\cup\{v\}.
        \]
        Similarly, $B_2\in\CN(B_1,A_6)$,
        $B_1\triangle A_6=\{x,b,a,w\}$, and by \ref{cn:4} of
        Lemma~\ref{lem:cn}, $B_2$ is equal to one of
        \[
          (B_1\setminus\{x\})\cup\{w\},\quad
          (B_1\setminus\{x\})\cup\{a\},\quad
          (B_1\setminus\{b\})\cup\{w\}.
        \]

        \begin{itemize}
        \setlength{\itemsep}{1em}
          \item \emph{Suppose that
          $B_4=(B_1\setminus\{y\})\cup\{v\}$.}

            \begin{itemize}
              \item \emph{Suppose that
              $B_2=(B_1\setminus\{x\})\cup\{w\}$.}

                Note that $B_3\in\CN(B_2,B_4)$ and
                $B_2\triangle B_4=\{w,y,x,v\}$. By \ref{cn:4} of
                Lemma~\ref{lem:cn}, $B_3$ is equal to one of
                \[
                  (B_2\setminus\{w\})\cup\{x\},\quad
                  (B_2\setminus\{y\})\cup\{x\},\quad
                  (B_2\setminus\{y\})\cup\{v\}.
                \]
                These three vertices are adjacent to $A_1$, $A_2$,
                and $A_3$, respectively, contradicting~\eqref{eq:vertex-B3}.

              \item \emph{Suppose that
              $B_2=(B_1\setminus\{x\})\cup\{a\}$.}

                Since $B_2\sim A_6$ and
                $B_2\triangle A_6=\{b,w\}$, we have $b\sim w$.
                Note that $B_3\in\CN(B_2,B_4)$ and
                $B_2\triangle B_4=\{y,a,x,v\}$. By \ref{cn:4} of
                Lemma~\ref{lem:cn}, $B_3$ is equal to one of
                \[
                  (B_2\setminus\{y\})\cup\{x\},\quad
                  (B_2\setminus\{y\})\cup\{v\},\quad
                  (B_2\setminus\{a\})\cup\{x\},\quad
                  (B_2\setminus\{a\})\cup\{v\}.
                \]
                These four vertices are adjacent to $A_2$, $A_3$, $A_1$,
                and $A_4$, respectively, contradicting~\eqref{eq:vertex-B3}.

              \item \emph{Suppose that
              $B_2=(B_1\setminus\{b\})\cup\{w\}$.}

                Since $B_2\sim A_6$ and
                $B_2\triangle A_6=\{a,x\}$, we have $a\sim x$.
                Note that $B_3\in\CN(B_2,B_4)$,
                $B_2\triangle B_4=\{w,y,b,v\}$, and
                $v\not\sim w$. By \ref{cn:4} of
                Lemma~\ref{lem:cn},
                \[
                  B_3=(B_2\setminus\{y\})\cup\{v\}.
                \]
                This vertex is adjacent to $A_3$, contradicting~\eqref{eq:vertex-B3}.
            \end{itemize}

          \item \emph{Suppose that
          $B_4=(B_1\setminus\{y\})\cup\{a\}$.}

            Since $B_4\sim A_5$ and
            $B_4\triangle A_5=\{b,v\}$, we have $b\sim v$.
            Moreover, $B_4\not\sim A_2$, since $B_4'\not\sim A_2'$;
            as $B_4\triangle A_2=\{b,w\}$, we have $b\not\sim w$.

            \begin{itemize}
              \item \emph{Suppose that
              $B_2=(B_1\setminus\{x\})\cup\{w\}$.}

                Note that $B_3\in\CN(B_2,B_4)$ and
                $B_2\triangle B_4=\{w,y,a,x\}$. By \ref{cn:4} of
                Lemma~\ref{lem:cn}, $B_3$ is equal to one of
                \[
                  (B_2\setminus\{w\})\cup\{a\},\quad
                  (B_2\setminus\{w\})\cup\{x\},\quad
                  (B_2\setminus\{y\})\cup\{a\},\quad
                  (B_2\setminus\{y\})\cup\{x\}.
                \]
                These four vertices are adjacent to $A_4$, $A_1$, $A_3$,
                and $A_2$, respectively, contradicting~\eqref{eq:vertex-B3}.

              \item \emph{Suppose that
              $B_2=(B_1\setminus\{x\})\cup\{a\}$.}

                Since $B_2\sim A_6$ and
                $B_2\triangle A_6=\{b,w\}$, we have $b\sim w$,
                contradicting $b\not\sim w$.

              \item \emph{Suppose that
              $B_2=(B_1\setminus\{b\})\cup\{w\}$.}

              This contradicts $b \not \sim w$.

            \end{itemize}

          \item \emph{Suppose that
          $B_4=(B_1\setminus\{b\})\cup\{v\}$.}

            Since $B_4\sim A_5$ and
            $B_4\triangle A_5=\{a,y\}$, we have $a\sim y$.

            \begin{itemize}
              \item \emph{Suppose that
              $B_2=(B_1\setminus\{x\})\cup\{w\}$.}

                Note that $B_3\in\CN(B_2,B_4)$ and
                $B_2\triangle B_4=\{w,b,x,v\}$. By \ref{cn:4} of
                Lemma~\ref{lem:cn}, $B_3$ is equal to one of
                \[
                  (B_2\setminus\{w\})\cup\{x\},\quad
                  (B_2\setminus\{b\})\cup\{v\}.
                \]
                These two vertices are adjacent to $A_1$
                and $A_3$, respectively, contradicting~\eqref{eq:vertex-B3}.

              \item \emph{Suppose that
              $B_2=(B_1\setminus\{x\})\cup\{a\}$.}

                Note that $B_3\in\CN(B_2,B_4)$ and
                $B_2\triangle B_4=\{a,b,x,v\}$. By \ref{cn:4} of
                Lemma~\ref{lem:cn}, $B_3$ is equal to one of
                \[
                  (B_2\setminus\{a\})\cup\{x\},\quad
                  (B_2\setminus\{a\})\cup\{v\},\quad
                  (B_2\setminus\{b\})\cup\{x\},\quad
                  (B_2\setminus\{b\})\cup\{v\}.
                \]
                The first two vertices are adjacent to $A_1$ and $A_4$,
                respectively, while the last two are $A_1$ and $A_4$.
                Thus, in every case we contradict either
                $B_3\notin\{A_1,A_4\}$ or~\eqref{eq:vertex-B3}.

              \item \emph{Suppose that
              $B_2=(B_1\setminus\{b\})\cup\{w\}$.}

                Since $B_2\triangle A_2=\{a,y\}$ and $a\sim y$, we have
                $B_2\sim A_2$. However, $B_2'\not\sim A_2'$, a
                contradiction.
            \end{itemize}
        \end{itemize}
    \end{itemize}
\end{itemize}

In every case we reached a contradiction. Therefore
\[
  N_{G'}(x')\setminus\{y'\}=N_{G'}(y')\setminus\{x'\},
\]
and, by the reduction at the start of this subsection, this completes the proof
of~\ref{m:CN}.


\subsection{Proof of~\ref{m:comp} of Theorem~\ref{thm:G}}

Let
\[
  N:=N_G(x)\setminus\{y\}=N_G(y)\setminus\{x\}
  \quad\text{and}\quad
  N':=N_{G'}(x')\setminus\{y'\}=N_{G'}(y')\setminus\{x'\},
\]
where the equalities follow from~\ref{m:CN}.

Let $P,Q \in V(F_{k^\ast}(G))$. In what follows assume that
\[P \triangle  Q=\{x,y\} \textrm{ with } x\in P \textrm{ and } y \in Q.\]
In this case, we say that $(P,Q)$ is an \emph{$xy$-pair}.
Let
\[S:= P\cap Q.\]
By \ref{cn:2} of Lemma~\ref{lem:cn}, we have that
\[\CN(P,Q)  = \underbrace{\bigl\{S\cup\{c\} : c\in N\setminus S \bigr\}}_{\text{``$Zero$-type''}}
\cup \underbrace{\bigl\{ (S\setminus\{c\})\cup\{x,y\} : c\in N\cap S \bigr\}}_{\text{``$Double$-type''}}, \]
and
\begin{equation}\label{eq:comp-cnxy}
|\CN(P,Q)| =|N|.
\end{equation}
The common neighbours of $P$ and $Q$ of the form $\{S\cup\{c\} : c\in N\setminus S\}$, do not have
tokens at either $x$ or $y$---we call them \emph{$Zero$-type}. Whereas the common neighbours of $P$ and $Q$ of the form $\{ (S\setminus\{c\})\cup\{x,y\} : c\in N\cap S\}$, have a token at each of
$x$ and $y$---we call them \emph{$Double$-type}.

The same description applies on the primed side, with $N'$ in place of $N$.
In particular, applying $\varphi^\ast$ to the $xy$-pair $(A_5,A_4)$, whose image
$(A_5',A_4')$ is the $x'y'$-pair with common part $X'\cup\{v'\}$, we obtain:
\[|N|=|N'|.\]

We say that $(P,Q)$ is \emph{strong} if in addition
\[\varphi^\ast(P)\sd\varphi^\ast(Q)=\{x',y'\}\] with $x'\in\varphi^\ast(P)$
and $y'\in\varphi^\ast(Q)$.
Whenever $(P,Q)$ is a strong $xy$-pair,
we write
\[
  P':=\varphi^\ast(P),
  \qquad
  Q':=\varphi^\ast(Q),
  \qquad
  S':=P'\cap Q'.
\]
Thus
\[
  P'=S'\cup\{x'\}
  \qquad\text{and}\qquad
  Q'=S'\cup\{y'\}.
\]

Let $u\in S$ and let
\[
  z\in V(G)\setminus\bigl(S\cup\{x,y\}\bigr)
\]
be such that $uz\in E(G)$. Set
\[
  \widetilde P:=(P\setminus\{u\})\cup\{z\}
  \qquad\text{and}\qquad
  \widetilde Q:=(Q\setminus\{u\})\cup\{z\}.
\]
Thus, $\widetilde P$ and $\widetilde Q$ are obtained from $P$ and
$Q$, respectively, by moving the token at $u$ along the edge $uz$
to $z$. Note that $(\widetilde P,\widetilde Q)$ is again an
$xy$-pair. We call this operation a \emph{move from $u$ to $z$}
and denote it by
\[
  (P,Q)\xrightarrow{\,u\to z\,}
  (\widetilde P,\widetilde Q).
\]

\begin{lemma}[Step Lemma]\label{lem:comp-step}
Suppose that $(P,Q)$ is a strong $xy$-pair and that
\[
  (P,Q)\xrightarrow{\,u\to z\,}(\widetilde P,\widetilde Q).
\]
Let
\[
  \widetilde P':=\varphi^\ast(\widetilde P),
  \qquad
  \widetilde Q':=\varphi^\ast(\widetilde Q).
\]
Then there exist $u',s'\in S'$ and
$t',r'\notin S'\cup\{x',y'\}$ such that
\[
  \widetilde P'
    =(P'\setminus\{u'\})\cup\{t'\}
\textrm{ and }
  \widetilde Q'
    =(Q'\setminus\{s'\})\cup\{r'\}.
\]
In particular, we have the  \emph{weak invariant}
\[
  x'\in\widetilde P'\setminus\widetilde Q'
  \qquad\text{and}\qquad
  y'\in\widetilde Q'\setminus\widetilde P'.
\]
Moreover, the following hold.
\begin{enumerate}[label=\textup{(\roman*)},leftmargin=2.6em]
  \item If $xy\in E(G)$ or $|N|\ge 3$, then
  \[
    u'=s'
    \qquad\text{and}\qquad
    t'=r'.
  \]
  Hence $(\widetilde P,\widetilde Q)$ is strong, with common part
  \[
    \widetilde S'
      =(S'\setminus\{u'\})\cup\{t'\}.
  \]

  \item If $xy\notin E(G)$ and $|N|=2$,  then exactly one of the
  following cases occurs:
  \begin{description}[leftmargin=2.6em]
    \item[\textup{(S)}]
      $u'=s'$ and $t'=r'$. In this case,
      $(\widetilde P,\widetilde Q)$ is strong, with common part
      \[
        \widetilde S'
          =(S'\setminus\{u'\})\cup\{t'\}.
      \]

    \item[\textup{(A)}]
      $u'=s'$, $t'\ne r'$ and $ t',r'\in N'\setminus S'.$

    \item[\textup{(B)}]
      $t'=r'$, $u'\ne s'$ and  $ u',s'\in N'\cap S'$.
  \end{description}
\end{enumerate}
\end{lemma}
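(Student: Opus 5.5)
The plan is to exploit the fact that $P,Q,\widetilde P,\widetilde Q$ form an induced $4$-cycle in $F_{k^\ast}(G)$: $P\sim Q$ fails (since $P\triangle Q=\{x,y\}$ need not be an edge), so instead use the adjacencies $P\sim\widetilde P$, $Q\sim\widetilde Q$ together with common-neighbour counts from Lemma~\ref{lem:cn}. First, $\widetilde P'$ is adjacent to $P'=S'\cup\{x'\}$, so $\widetilde P'$ is obtained from $P'$ by a single token move. I will rule out that the moved token is $x'$, and that a token moves onto $y'$. The tool is the count $|\CN(\widetilde P,\widetilde Q)|=|N|$ (by~\eqref{eq:comp-cnxy}, since $(\widetilde P,\widetilde Q)$ is an $xy$-pair) versus $|\CN(P,\widetilde Q)|$ and $|\CN(\widetilde P,Q)|$. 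Here $P\triangle\widetilde Q=\{x,u,y,z\}$, so by \ref{cn:4} of Lemma~\ref{lem:cn} this count lies in $\{0,2,4\}$ and is determined by the edges $xy,uz,xz,uy$. These counts transfer to the primed side via $\varphi^\ast$.

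\textbf{Step 1 (the weak invariant).} Write $\widetilde P'=(P'\setminus\{a'\})\cup\{t'\}$ and $\widetilde Q'=(Q'\setminus\{b'\})\cup\{r'\}$. If $a'=x'$, then $\widetilde P'$ is a neighbour of $P'$ not containing $x'$. I compare $\widetilde P'\triangle\widetilde Q'$ with $\widetilde P\triangle\widetilde Q=\{x,y\}$: the images must have symmetric difference of size $2$ or $4$ with exactly $|N|$ common neighbours. This forces constraints I expect to contradict either $\widetilde P\not\sim Q$ (and $P\not\sim\widetilde Q$) or the count $|N|$. The cases $t'=y'$, $b'=y'$ and $r'=x'$ are handled the same way, using the symmetries of Lemma~\ref{lem:symmetries} (the $x$--$y$ swap) to halve the work. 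This yields $u',s'\in S'$ and $t',r'\notin S'\cup\{x',y'\}$, hence the weak invariant.

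\textbf{Step 2 (comparing $\widetilde P'$ and $\widetilde Q'$).} Now $\widetilde P'\triangle\widetilde Q'$ contains $\{x',y'\}$ together with $\{u',s'\}\triangle\{t',r'\}$-type leftovers, so it has size $2$, $4$ or $6$. Size $6$ (both $u'\neq s'$ and $t'\neq r'$) gives no common neighbours by \ref{cn:6}, contradicting $|\CN|=|N|\ge 1$. The case $|N|=0$ is impossible, since $v\in N$ because $xv,yv\in E(G)$; indeed $v,w\in N$, so $|N|\ge2$.

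If exactly one of $u'=s'$, $t'=r'$ holds, the symmetric difference has size $4$. By \ref{cn:4} the count is in $\{0,2,4\}$, and it equals $2[\cdot]+2[\cdot]$ with edges among $x',y',t',r'$ (or $u',s'$). For the count to be $|N|$ we need $|N|\in\{2,4\}$.
\begin{itemize}
\item If $|N|\ge3$, then $|N|=4$ forces both matchings to be present, so $x'y'\in E(G')$. By~\ref{m:xy} this means $xy\in E(G)$. Then I expect to derive a contradiction by counting $\CN(P',\widetilde P')$ against $\CN(P,\widetilde P)$ via \ref{cn:2}; this is the delicate point.
\item If $xy\in E(G)$, the $4$-set would have to contain the edge $x'y'$ matched together with $t'r'$ (or $u's'$). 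I aim to show this is incompatible with the adjacency/nonadjacency pattern between $\{P,Q\}$ and $\{\widetilde P,\widetilde Q\}$.
\end{itemize}
Hence in case (i) we get $u'=s'$ and $t'=r'$, and $(\widetilde P,\widetilde Q)$ is strong.

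\textbf{Step 3 (case (ii)).} When $xy\notin E(G)$ and $|N|=2$, the count $2$ in the size-$4$ case means exactly one matching is present. Since $x'y'\notin E(G')$ by~\ref{m:xy}, the present matching must be $x't',y'r'$ or $x'r',y't'$ in case (A), giving $t',r'\in N'$; in case (B) it gives $u',s'\in N'$ (the vertices of $S'$ adjacent to both). Mutual exclusivity of (S), (A), (B) is immediate from the definitions.

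The main obstacle is Step 1, ruling out that the image move involves $x'$ or $y'$. I would attack it first by the common-neighbour counts of the three mixed pairs, splitting on whether $z\in N$ and $u\in N$.
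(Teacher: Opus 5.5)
\textbf{Step 1 is not actually proved.} You flag it as the main obstacle, but you only say that common-neighbour counts of the mixed pairs should ``force constraints'' you ``expect'' to be contradictory. The missing idea is to use~\ref{m:CN} on the primed side, namely that $x'$ and $y'$ are twins in $G'$. With that, no counting is needed.

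Since $\widetilde P\triangle Q=\{u,z,x,y\}$, you have $\widetilde P'\neq Q'$ and $\widetilde P'\not\sim Q'$.
\begin{itemize}
\item Suppose the move $P'\to\widetilde P'$ takes the token at $x'$ to some $t'$. Either $t'=y'$, and then $\widetilde P'=Q'$. Or $t'\in N_{G'}(x')\setminus\{y'\}=N_{G'}(y')\setminus\{x'\}$, and then $\widetilde P'=S'\cup\{t'\}\sim S'\cup\{y'\}=Q'$.
\item Suppose instead that some $u'\in S'$ moves onto $y'$. Then $u'\sim y'$ forces $u'\sim x'$, so $\widetilde P'=(S'\setminus\{u'\})\cup\{x',y'\}\sim Q'$.
\end{itemize}
Both contradict $\widetilde P'\not\sim Q'$, and the $Q$-side is symmetric. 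Your proposal never invokes~\ref{m:CN} here. The counts you suggest comparing do not by themselves give a contradiction: for instance, $|\CN(\widetilde P,Q)|=2[xy\in E(G)]+2[u,z\in N]$ is set against a primed pair whose symmetric difference $\{t',y'\}$ has size two. You give no indication of how a contradiction would arise from them.

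\textbf{Case (i) with $xy\in E(G)$.} This is a second, smaller gap. You call it ``delicate'' and propose comparing $\CN(P',\widetilde P')$ with $\CN(P,\widetilde P)$, and it is unclear that this leads anywhere. The direct argument is short. Here $\widetilde P\triangle\widetilde Q=\{x,y\}$ is an edge, so $\widetilde P\sim\widetilde Q$ and hence $\widetilde P'\sim\widetilde Q'$. Together with the weak invariant this forces $\widetilde P'\triangle\widetilde Q'=\{x',y'\}$, so $u'=s'$ and $t'=r'$. Your reduction of $|N|\ge 3$ to this case is fine once this is in place: both matchings present forces $x'y'\in E(G')$, and then~\ref{m:xy} gives $xy\in E(G)$. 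One can also argue directly as in the paper: if $x'y'\notin E(G')$, a size-four difference gives at most $2$ common neighbours.

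\textbf{Step 3.} The pairing $x't',y'r'$ is not a valid matching, because $x'$ and $t'$ lie on the same side of $\widetilde P'\triangle\widetilde Q'$. The only admissible option is $x'r',t'y'$, and concluding $t',r'\in N'$ again uses~\ref{m:CN}, through the definition of $N'$.
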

\begin{proof}
By the definition of a move,
\[
  \widetilde P=(P\setminus\{u\})\cup\{z\}.
\]
Thus, $\widetilde P\sim P$. Moreover,
\[
  \widetilde P\triangle Q=\{u,z,x,y\},
\]
and hence
\[
  \widetilde P\ne Q
  \qquad\text{and}\qquad
  \widetilde P\not\sim Q.
\]
Since $\varphi^\ast$ is an isomorphism, it follows that
\[
  \widetilde P'\sim P',\qquad
  \widetilde P'\ne Q',
  \qquad\text{and}\qquad
  \widetilde P'\not\sim Q'.
\]

Since $\widetilde P'\sim P'$, there exist
\[
  u'\in S'\cup\{x'\}
  \qquad\text{and}\qquad
  t'\notin S'\cup\{x'\}
\]
such that $u't'\in E(G')$ and
\[
  \widetilde P'=(P'\setminus\{u'\})\cup\{t'\}.
\]
We claim that
\[
  u'\in S'
  \qquad\text{and}\qquad
  t'\ne y'.
\]

Suppose first that $u'=x'$. If $t'=y'$, then
\[
  \widetilde P'=Q',
\]
a contradiction. Otherwise, $t'\ne y'$. Since $x't'\in E(G')$,
it follows from~\ref{m:CN} that $y't'\in E(G')$. Therefore,
\[
  \widetilde P'=S'\cup\{t'\}
  \sim S'\cup\{y'\}=Q',
\]
again a contradiction. Hence $u'\in S'$.

Now suppose that $t'=y'$. Since $u'y'\in E(G')$ and
$u'\in S'$, it follows from~\ref{m:CN} that $u'x'\in E(G')$.
Consequently,
\[
  \widetilde P'
    =(S'\setminus\{u'\})\cup\{x',y'\}
    \sim S'\cup\{y'\}
    =Q',
\]
a contradiction. Thus, $t'\ne y'$. Since we already know that
$t'\notin S'\cup\{x'\}$, we conclude that
\[
  u'\in S'
  \qquad\text{and}\qquad
  t'\notin S'\cup\{x',y'\}.
\]

Applying the same argument with the roles of $P$ and $Q$, and of
$x'$ and $y'$, interchanged, there exist
\[
  s'\in S'
  \qquad\text{and}\qquad
  r'\notin S'\cup\{x',y'\}
\]
such that
\[
  \widetilde Q'=(Q'\setminus\{s'\})\cup\{r'\}.
\]
Thus,
\[
  \widetilde P'
    =(S'\setminus\{u'\})\cup\{x',t'\}
\]
and
\[
  \widetilde Q'
    =(S'\setminus\{s'\})\cup\{y',r'\}.
\]
In particular,
\[
  x'\in\widetilde P'\setminus\widetilde Q'
  \qquad\text{and}\qquad
  y'\in\widetilde Q'\setminus\widetilde P',
\]
which proves the weak invariant.

Since $(\widetilde P,\widetilde Q)$ is an $xy$-pair,
Equation~\eqref{eq:comp-cnxy} gives
\[
  \bigl|\CN(\widetilde P,\widetilde Q)\bigr|=|N|.
\]
Moreover, $\varphi^\ast$ maps this common-neighbourhood bijectively
onto $\CN(\widetilde P',\widetilde Q')$. Hence
\begin{equation}\label{eq:step-cn}
  \bigl|\CN(\widetilde P',\widetilde Q')\bigr|=|N|.
\end{equation}
Also,
\begin{equation}\label{eq:step-sd}
  \widetilde P'\sd\widetilde Q'
  =
  \{x',y'\}
  \cup
  \bigl(\{t'\}\sd\{r'\}\bigr)
  \cup
  \bigl(\{u'\}\sd\{s'\}\bigr).
\end{equation}
The three sets on the right-hand side are pairwise disjoint.

We first show that it is impossible to have simultaneously
\[
  u'\ne s'
  \qquad\text{and}\qquad
  t'\ne r'.
\]
Indeed, in that case~\eqref{eq:step-sd} gives
\[
  \bigl|\widetilde P'\sd\widetilde Q'\bigr|=6.
\]
By \ref{cn:6} of Lemma~\ref{lem:cn}, this would imply
\[
  \CN(\widetilde P',\widetilde Q')=\emptyset,
\]
contradicting~\eqref{eq:step-cn} and $|N|\ge2$. Therefore,
\begin{equation}\label{eq:step-equality}
  u'=s'
  \qquad\text{or}\qquad
  t'=r'.
\end{equation}

We now prove the remaining assertions.

\begin{itemize}
  \item[\textup{(i)}]
    Suppose first that $xy\in E(G)$. Since
    \[
      \widetilde P\triangle\widetilde Q=\{x,y\},
    \]
    we have $\widetilde P\sim\widetilde Q$, and hence
    \[
      \widetilde P'\sim\widetilde Q'.
    \]
    By the weak invariant, $x'$ and $y'$ belong to
    $\widetilde P'\sd\widetilde Q'$. Since adjacent token
    configurations have symmetric difference of size two, it follows
    that
    \[
      \widetilde P'\sd\widetilde Q'=\{x',y'\}.
    \]
    Therefore,
    \[
      (S'\setminus\{u'\})\cup\{t'\}
      =
      (S'\setminus\{s'\})\cup\{r'\}.
    \]
    Since $u',s'\in S'$ and $t',r'\notin S'$, this equality implies
    \[
      u'=s'
      \qquad\text{and}\qquad
      t'=r'.
    \]

    Suppose now that
    \[
      xy\notin E(G)
      \qquad\text{and}\qquad
      |N|\ge3.
    \]
    By~\ref{m:xy}, we also have $x'y'\notin E(G')$.
    By~\eqref{eq:step-equality}, at least one of the equalities
    \[
      u'=s'
      \qquad\text{or}\qquad
      t'=r'
    \]
    holds. If exactly one of them holds, then
    \[
      \bigl|\widetilde P'\sd\widetilde Q'\bigr|=4.
    \]
    Since $x'$ and $y'$ lie on opposite sides of this symmetric
    difference and $x'y'\notin E(G')$, \ref{cn:4} of
    Lemma~\ref{lem:cn} gives
    \[
      \bigl|\CN(\widetilde P',\widetilde Q')\bigr|\le2.
    \]
    This contradicts~\eqref{eq:step-cn} and $|N|\ge3$. Hence
    \[
      u'=s'
      \qquad\text{and}\qquad
      t'=r'.
    \]

    Thus, if $xy\in E(G)$ or $|N|\ge3$, the pair
    $(\widetilde P,\widetilde Q)$ is strong, with common part
    \[
      \widetilde S'
        =(S'\setminus\{u'\})\cup\{t'\}.
    \]

  \item[\textup{(ii)}]
    Suppose that
    \[
      xy\notin E(G)
      \qquad\text{and}\qquad
      |N|=2.
    \]
    By~\ref{m:xy}, we have $x'y'\notin E(G')$, and
    Equation~\eqref{eq:step-cn} becomes
    \begin{equation}\label{eq:step-cn-two}
      \bigl|\CN(\widetilde P',\widetilde Q')\bigr|=2.
    \end{equation}
    By~\eqref{eq:step-equality}, at least one of
    \[
      u'=s'
      \qquad\text{or}\qquad
      t'=r'
    \]
    holds.

    If both equalities hold, then
    $(\widetilde P,\widetilde Q)$ is strong, with common part
    \[
      \widetilde S'
        =(S'\setminus\{u'\})\cup\{t'\}.
    \]
    This is case \textup{(S)}.

    Suppose next that
    \[
      u'=s'
      \qquad\text{and}\qquad
      t'\ne r'.
    \]
    Then
    \[
      \widetilde P'\setminus\widetilde Q'=\{x',t'\}
      \qquad\text{and}\qquad
      \widetilde Q'\setminus\widetilde P'=\{y',r'\}.
    \]
    Since $x'y'\notin E(G')$, \ref{cn:4} of
    Lemma~\ref{lem:cn}, together with~\eqref{eq:step-cn-two},
    implies that
    \[
      x'r',t'y'\in E(G').
    \]
    By~\ref{m:CN}, it follows that
    \[
      t',r'\in N'.
    \]
    Since $t',r'\notin S'$, we obtain
    \[
      t',r'\in N'\setminus S'.
    \]
    This is case \textup{(A)}.

    Finally, suppose that
    \[
      t'=r'
      \qquad\text{and}\qquad
      u'\ne s'.
    \]
    Then
    \[
      \widetilde P'\setminus\widetilde Q'=\{x',s'\}
      \qquad\text{and}\qquad
      \widetilde Q'\setminus\widetilde P'=\{y',u'\}.
    \]
    Since $x'y'\notin E(G')$, \ref{cn:4} of
    Lemma~\ref{lem:cn}, together with~\eqref{eq:step-cn-two},
    implies that
    \[
      x'u',s'y'\in E(G').
    \]
    By~\ref{m:CN}, it follows that
    \[
      u',s'\in N'.
    \]
    Since $u',s'\in S'$, we obtain
    \[
      u',s'\in N'\cap S'.
    \]
    This is case \textup{(B)}.

    The three cases \textup{(S)}, \textup{(A)}, and \textup{(B)}
    are mutually exclusive and, by~\eqref{eq:step-equality},
    exhaustive.
\end{itemize}
\end{proof}

\medskip\noindent\textbf{Chains and the easy case.}

We call a sequence of $xy$-pairs, each obtained from the previous
one by a move, a \emph{chain}. The assignment
\[
  (P,Q)\longmapsto S=P\cap Q
\]
identifies chains with walks in
$F_{k^\ast-1}(G\setminus\{x,y\})$: a move
\[
  (P,Q)\xrightarrow{\,u\to z\,}(\widetilde P,\widetilde Q)
\]
uses an edge $uz$ of $G\setminus\{x,y\}$, and conversely.

We shall use the following standard consequence of the connectivity
of token graphs of connected graphs: if $H$ is a connected subgraph
of $G\setminus\{x,y\}$, and all tokens outside $H$ are kept fixed,
then the tokens inside $H$ can be moved from any configuration to any
other configuration of the same size by moves inside $H$.

\begin{lemma}\label{lem:comp-easy}
If
\[
  xy\in E(G)
  \qquad\text{or}\qquad
  |N|\ge3,
\]
then $v$ and $w$ lie in different components of
$G\setminus\{x,y\}$.
\end{lemma}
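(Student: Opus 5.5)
The plan is to assume that $v$ and $w$ lie in the same component $H$ of $G\setminus\{x,y\}$, and to propagate strongness along a chain of $xy$-pairs. The chain will start at a strong pair and end at a pair whose image is visibly \emph{not} strong, which is a contradiction.

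\emph{The two endpoint pairs.} By~\eqref{eq:As}, $A_5=X^\ast\cup\{x,v\}$ and $A_4=X^\ast\cup\{y,v\}$. Hence $(A_5,A_4)$ is an $xy$-pair with common part $S=X^\ast\cup\{v\}$. Its image is $(A_5',A_4')=(X'\cup\{x',v'\},\,X'\cup\{y',v'\})$, with $x'\in A_5'$ and $y'\in A_4'$. So $(A_5,A_4)$ is a \emph{strong} $xy$-pair. On the other side, $(A_2,A_6)=(X^\ast\cup\{x,w\},\,X^\ast\cup\{y,w\})$ is an $xy$-pair with common part $X^\ast\cup\{w\}$. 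However, $\varphi^\ast(A_2)=A_2'=X'\cup\{y',w'\}$ contains $y'$ and not $x'$. Therefore $(A_2,A_6)$ fails even the weak invariant of Lemma~\ref{lem:comp-step}, and in particular it is not strong.

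\emph{Building the chain.} Via $(P,Q)\mapsto P\cap Q$, chains correspond to walks in $F_{k^\ast-1}(G\setminus\{x,y\})$. So it suffices to find a walk from $X^\ast\cup\{v\}$ to $X^\ast\cup\{w\}$ in this token graph. Both configurations agree outside $H$, since $v,w\in H$. Inside $H$ they have the same number of tokens, namely $|X^\ast\cap H|+1$. Since $H$ is connected, the standard connectivity fact quoted above gives a sequence of moves inside $H$, with all tokens outside $H$ fixed, from one configuration to the other. Here one uses that $x,y\notin X^\ast\cup\{v,w\}$, so every intermediate common part avoids $\{x,y\}$ and each step is a genuine move of $xy$-pairs. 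This yields a chain
\[
  (A_5,A_4)=(P_0,Q_0)\to(P_1,Q_1)\to\cdots\to(P_m,Q_m)=(A_2,A_6).
\]

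\emph{Propagation and contradiction.} Under the hypothesis $xy\in E(G)$ or $|N|\ge 3$, part~(i) of Lemma~\ref{lem:comp-step} says that a move applied to a strong $xy$-pair yields a strong $xy$-pair. By induction along the chain, every $(P_j,Q_j)$ is strong, and in particular $(A_2,A_6)$ is strong. That would give $x'\in\varphi^\ast(A_2)=A_2'$, contradicting $A_2'=X'\cup\{y',w'\}$. Hence $v$ and $w$ lie in different components of $G\setminus\{x,y\}$.

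The only delicate point is making sure the chain genuinely consists of moves in the sense of the Step Lemma: each move must go from a vertex $u$ of the common part to a vertex $z\notin S\cup\{x,y\}$. This holds because all moves take place inside $H\subseteq G\setminus\{x,y\}$. Everything else is a direct application of Lemma~\ref{lem:comp-step}(i).
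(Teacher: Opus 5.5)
Your proposal is correct and follows essentially the same route as the paper. Both arguments assume $v,w$ lie in a common component, connect $(A_5,A_4)$ to $(A_2,A_6)$ by a chain of $xy$-pairs using connectivity of token graphs inside that component, and propagate strongness via part~(i) of Lemma~\ref{lem:comp-step}. The contradiction in both is $x'\notin A_2'=X'\cup\{y',w'\}$.
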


\begin{proof}
Suppose, for a contradiction, that $v$ and $w$ lie in the same
component $H$ of $G\setminus\{x,y\}$.

The $xy$-pairs $(A_5,A_4)$ and $(A_2,A_6)$ have common parts
\[
  X^\ast\cup\{v\}
  \qquad\text{and}\qquad
  X^\ast\cup\{w\},
\]
respectively. Since $v$ and $w$ lie in the same component of
$G\setminus\{x,y\}$, these two common parts place the same number of
tokens in every component of $G\setminus\{x,y\}$. Hence there is a
chain
\[
  (P_1,Q_1),\ldots,(P_{\ell+1},Q_{\ell+1})
\]
from
\[
  (P_1,Q_1)=(A_5,A_4)
\]
to
\[
  (P_{\ell+1},Q_{\ell+1})=(A_2,A_6).
\]

The pair $(A_5,A_4)$ is strong, since
\[
  \varphi^\ast(A_5)=A_5'=X'\cup\{x',v'\}
  \qquad\text{and}\qquad
  \varphi^\ast(A_4)=A_4'=X'\cup\{v',y'\}.
\]
By \textup{(i)} of Lemma~\ref{lem:comp-step}, every pair in the
chain is strong. In particular, the final pair $(A_2,A_6)$ is
strong. Thus
\[
  x'\in\varphi^\ast(A_2)=A_2'=X'\cup\{y',w'\};
\]
a contradiction.
\end{proof}

\medskip\noindent\textbf{The hard case.}

By Lemma~\ref{lem:comp-easy}, it what follows assume that
\begin{equation}\label{eq:comp-rigid}
  xy\notin E(G)
  \qquad\text{and}\qquad
  |N|=2.
\end{equation}
By~\ref{m:xy}, we also have
\[
  x'y'\notin E(G').
\]
Moreover, the edges $xv$ and $yw$ are the original disjoint edges,
and $x'v'$ and $y'w'$ are edges of the cycle $C'$. Hence, by
\ref{m:CN},
\begin{equation}\label{eq:comp-rigidN}
  N=\{v,w\}
  \qquad\text{and}\qquad
  N'=\{v',w'\}.
\end{equation}
We shall also use that
\[
  vw\notin E(G)
  \qquad\text{and}\qquad
  v'w'\notin E(G').
\]

\medskip\noindent\textbf{Aligned pairs.}

Define
\[
  n(P,Q):=|S\cap\{v,w\}|\in\{0,1,2\}.
\]
If $(P,Q)$ is strong, define
\[
  m(P,Q):=|S'\cap\{v',w'\}|\in\{0,1,2\}.
\]

Since $N=\{v,w\}$, Equation~\eqref{eq:comp-cnxy} says that
$\CN(P,Q)$ has exactly two elements. More explicitly:
\[
\begin{array}{lll}
n=0:
  & Z_v:=S\cup\{v\},
  & Z_w:=S\cup\{w\};\\[2pt]
n=1:
  & Z:=S\cup\{\bar c\},
  & D:=(S\setminus\{c\})\cup\{x,y\};\\[2pt]
n=2:
  & D_v:=(S\setminus\{v\})\cup\{x,y\},
  & D_w:=(S\setminus\{w\})\cup\{x,y\}.
\end{array}
\]
In the middle row, $c$ denotes the unique element of
$S\cap\{v,w\}$, and $\bar c$ denotes the other element of
$\{v,w\}$.

Note that the configurations of the form $S\cup\{c\}$ are
$Zero$-configurations of the pair, and the configurations of the
form
$
  (S\setminus\{c\})\cup\{x,y\}$
are $Double$-configurations of the pair.

For a strong pair, the same description applies to
$\CN(P',Q')$ on the primed side, with
\[
  S',\quad m(P,Q),\quad \{v',w'\},\quad \{x',y'\}
\]
in place of
\[
  S,\quad n(P,Q),\quad \{v,w\},\quad \{x,y\},
\]
respectively.
We call the corresponding configurations the
\emph{$Zero'$-configurations} and \emph{$Double'$-configurations}.

\begin{definition}\label{def:comp-aligned}
Let $(P,Q)$ be an $xy$-pair. We say that $(P,Q)$ is
\emph{aligned} if the following hold:
\begin{enumerate}[label=\textup{(\alph*)},leftmargin=2.6em]
  \item $(P,Q)$ is strong;

  \item $m(P,Q)=n(P,Q)$;

  \item $\varphi^\ast$ maps each $Zero$-configuration of
  $\CN(P,Q)$ to a $Zero'$-configuration of $\CN(P',Q')$, and each
  $Double$-configuration to a $Double'$-configuration.
\end{enumerate}
For $n(P,Q)=0$ or $2$, condition \textup{(c)} follows from
condition \textup{(b)}, since when $m(P,Q)=0$ both primed common
neighbours are $Zero'$-configurations, and when $m(P,Q)=2$ both are
$Double'$-configurations. Thus condition \textup{(c)} has content
only when
\[
  n(P,Q)=m(P,Q)=1.
\]
\end{definition}

\begin{remark}\label{rem:comp-start}
The pair $(A_5,A_4)$ is aligned.
\end{remark}

\medskip\noindent\textbf{Move types.}

Let
\[
  (P,Q)\xrightarrow{\,u\to z\,}(\widetilde P,\widetilde Q).
\]
  We classify the move
$u\to z$ as follows:
\begin{itemize}[leftmargin=2.2em]
  \item it is \emph{safe} if
  \[
    u,z\notin\{v,w\};
  \]
  in this case
  \[
    n(\widetilde P,\widetilde Q)=n(P,Q);
  \]

  \item it is a \emph{departure} if
  \[
    u\in\{v,w\};
  \]
  in this case $z\notin\{v,w\}$, since $uz\in E(G)$ and
  $vw\notin E(G)$, and therefore
  \[
    n(\widetilde P,\widetilde Q)=n(P,Q)-1;
  \]

  \item it is an \emph{arrival} if
  \[
    z\in\{v,w\};
  \]
  in this case $u\notin\{v,w\}$, again because $uz\in E(G)$ and
  $vw\notin E(G)$, and therefore
  \[
    n(\widetilde P,\widetilde Q)=n(P,Q)+1.
  \]
\end{itemize}
This is a complete and disjoint
classification of the moves in the hard case.

\medskip\noindent\textbf{The Immunity Lemma.}
\begin{lemma}[Immunity Lemma]\label{lem:comp-imm}
Let $(P,Q)$ be an aligned $xy$-pair and suppose that
\[
  (P,Q)\xrightarrow{\,u\to z\,}(\widetilde P,\widetilde Q).
\]
Then the following hold.
\begin{enumerate}[label=\textup{(\roman*)},leftmargin=2.6em]
  \item\label{imm:safe}
  If the move is safe, then $(\widetilde P,\widetilde Q)$ is aligned.

  \item\label{imm:dep}
  If the move is a departure and $n(P,Q)=1$, then
  $(\widetilde P,\widetilde Q)$ is aligned.

  \item\label{imm:arr}
  If the move is an arrival and $n(P,Q)=1$, then
  $(\widetilde P,\widetilde Q)$ is aligned.
\end{enumerate}
We call the move steps covered by \textup{(i)}--\textup{(iii)}
\emph{immune}. The remaining move steps, namely arrivals at
$n(P,Q)=0$ and departures at $n(P,Q)=2$, are called
\emph{vulnerable}.
\end{lemma}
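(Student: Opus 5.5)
Throughout, we are in the hard case~\eqref{eq:comp-rigid}, with $N=\{v,w\}$ and $N'=\{v',w'\}$. The plan is to apply part~(ii) of the Step Lemma~\ref{lem:comp-step} to the aligned pair $(P,Q)$. We first show that the obstructive cases (A) and (B) are ruled out by the relevant value of $n(P,Q)=m(P,Q)$. Once the new pair is strong, we read off $m(\widetilde P,\widetilde Q)$ and the Zero/Double correspondence from how $\varphi^\ast$ acts on the common neighbourhoods.

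\textbf{Excluding (A) and (B).} In case (A) we have $t',r'\in N'\setminus S'$ with $t'\ne r'$, so $\{t',r'\}=\{v',w'\}$ and $S'\cap\{v',w'\}=\emptyset$, i.e.\ $m(P,Q)=0$. In case (B) we have $\{u',s'\}=\{v',w'\}\subseteq S'$, i.e.\ $m(P,Q)=2$. Since $(P,Q)$ is aligned, $m=n$. Hence for $n=1$ neither (A) nor (B) can occur, and the new pair is strong (case (S)).

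For a safe move with $n\in\{0,2\}$ we need an extra argument. A safe move $u\to z$ has $z\notin N$. I would use a common-neighbour count in $F_{k^\ast}(G)$ to force $u',t'\notin\{v',w'\}$. Since $z\notin N$, the configuration $\widetilde P$ is not adjacent to any Zero or Double configuration of $(P,Q)$ beyond what a move avoiding $x,y,v,w$ allows. Concretely, for $n=0$, $\widetilde P$ is adjacent to $S\cup\{v\}$ exactly when it is adjacent to $(\widetilde S)\cup\{v\}$, and these are compatible 4-cycles. Transporting this adjacency pattern by $\varphi^\ast$, with the Zero$'$/Double$'$ sets known, shows that $t'$ and $u'$ avoid $\{v',w'\}$. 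Then (A) and (B) are impossible, and $m$ stays equal to $n$.

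\textbf{Checking alignment.} Once $(\widetilde P,\widetilde Q)$ is strong, condition~(b) follows from the fact that $u'\to t'$ changes $|S'\cap\{v',w'\}|$ in the same way that $u\to z$ changes $n$. To get this, I would use that the common neighbours $Z$ and $D$ of $(P,Q)$ are adjacent, or not, to those of $(\widetilde P,\widetilde Q)$ according to whether the move touches $\{v,w\}$. For example, in a departure from $c$ at $n=1$, the Double configuration $D$ of the old pair equals $(\widetilde S\cup\{z\})\setminus\{z\}$, modified so that it lies at distance one from the new Zero configurations. Comparing these adjacencies across $\varphi^\ast$ forces $u'$ to be the unique element of $S'\cap\{v',w'\}$, so $m$ drops to $0$. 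Arrivals at $n=1$ are handled symmetrically; the complementation symmetry~\ref{sym:comp} of Lemma~\ref{lem:symmetries} exchanges arrivals with departures and Zero with Double, so only departures need to be treated. For condition~(c) at $n=m=1$ after a safe move, $\widetilde Z\sim Z$ (moving $u\to z$) while $\widetilde Z\not\sim D$, whose symmetric difference with $\widetilde Z$ has size $\ge 4$. This adjacency is preserved by $\varphi^\ast$, so the Zero$'$ image is forced.

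\textbf{Main obstacle.} The main obstacle is the safe case at $n=m\in\{0,2\}$. There the Step Lemma alone allows the degenerate outcomes (A) or (B), and the image move might secretly shift a token onto $v'$ or $w'$. I would first try an explicit adjacency test: find an auxiliary configuration (such as $\widetilde S\cup\{v\}$, or its Double analogue) that is adjacent to both the old and the new common neighbourhoods in $G$, and show that no such configuration exists in $G'$ under (A) or (B) by Lemma~\ref{lem:cn}~\ref{cn:4}. If that fails, I would instead track $|\CN|$ between the old and new Zero configurations, which equals $2$ in $G$ but $0$ in the degenerate primed case.
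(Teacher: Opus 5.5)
\textbf{There is a genuine gap.} Your frame matches the paper's: apply part (ii) of the Step Lemma, exclude (A) and (B) at $n=1$ via $m=n$, then transport adjacencies between old and new common neighbours through $\varphi^\ast$. Your exclusion of (A)/(B) at $n=1$ is correct. Your argument for (c) after a safe move at $n=1$ is also fine, once $m(\widetilde P,\widetilde Q)=1$ is known. However, the case you call the main obstacle, a safe move at $n=m\in\{0,2\}$, is never proved, and the concrete facts you propose for it are false:
\begin{itemize}
\item For $n=0$, $\widetilde P=(S\setminus\{u\})\cup\{x,z\}$ is never adjacent to $S\cup\{v\}$, since the symmetric difference is $\{x,z,u,v\}$. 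Yet it is always adjacent to $\widetilde S\cup\{v\}$, which is a common neighbour of the new pair.
\item An old and a new Zero configuration do not have $2$ common neighbours in general. For $S\cup\{v\}$ and $\widetilde S\cup\{v\}$ the count is $|N_G(u)\cap N_G(z)\setminus\{u,z\}|$. For $S\cup\{v\}$ and $\widetilde S\cup\{w\}$ it is $2[uw,zv\in E(G)]$.
\end{itemize}
Two further steps are also not established:
\begin{itemize}
\item Condition (b) after a safe move at $n=1$ is asserted rather than derived.
\item In the departure case your description of $D$ is backwards. $D=(S\setminus\{c\})\cup\{x,y\}$ contains $x$ and $y$, whereas after a departure at $n=1$ all new common neighbours are Zero configurations. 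So $D$ is adjacent to \emph{none} of them, and it is exactly this non-adjacency that must be transported.
\end{itemize}

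\textbf{The missing idea} is a two-sided matching between old and new common neighbourhoods, combined with the fact that a Zero configuration is never adjacent to a Double configuration (they differ at both $x$ and $y$, respectively $x'$ and $y'$). For a safe move, $S$ and $\widetilde S$ meet $\{v,w\}$ in the same set, so the new common neighbours are the old ones after the same move $u\to z$. Hence every old common neighbour is adjacent to some new one, and vice versa. Since $\varphi^\ast$ maps $\CN(P,Q)$ onto $\CN(P',Q')$ and $\CN(\widetilde P,\widetilde Q)$ onto $\CN(\widetilde P',\widetilde Q')$, the same must hold on the primed side. This closes each case:
\begin{itemize}
\item At $n=m=0$, the old primed common neighbours $S'\cup\{v'\}$ and $S'\cup\{w'\}$ avoid $x'$ and $y'$. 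In case (A) (where $t',r'\in N'$), and in case (S) with $t'\in\{v',w'\}$, the configuration $(S'\setminus\{u'\})\cup\{x',y'\}$ is a new primed common neighbour with no old neighbour. This contradiction gives strength and $m(\widetilde P,\widetilde Q)=0$.
\item At $n=m=2$, the Zero configuration $S'\cup\{t'\}$ plays the dual role against the old Double$'$ configurations. This excludes case (B) and $u'\in\{v',w'\}$.
\item At $n=1$, the same bookkeeping applies. For a safe move, $Z\sim\widetilde Z$ and $D\sim\widetilde D$. For a departure, $Z$ has a new neighbour and $D$ has none; for an arrival, the reverse. This excludes, one at a time, the combinations of ``$u'=d'$ or not'' and ``$t'=\bar d'$ or not'' that would give the wrong value of $m$. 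The combination $u'=d'$, $t'=\bar d'$ is impossible because $v'w'\notin E(G')$.
\end{itemize}
Your complementation shortcut for arrivals is legitimate. However, to stay strong rather than strong$^\ast$, it must send $(P,Q)$ to $(\mathfrak{c}(Q),\mathfrak{c}(P))$ for $\mathfrak{c}\circ\varphi^\ast\circ\mathfrak{c}$, and you should say so explicitly.
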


\begin{proof}
By the definition of move types, safe moves satisfy
$\widetilde n=n(P,Q)$, departures satisfy $n(\widetilde P,\widetilde Q)=n(P,Q)-1$, and arrivals
satisfy $n(\widetilde P,\widetilde Q)=n(P,Q)+1$.

Let
\[
  \widetilde P':=\varphi^\ast(\widetilde P)
  \qquad\text{and}\qquad
  \widetilde Q':=\varphi^\ast(\widetilde Q).
\]
Since $(P,Q)$ is aligned, it is strong and
\[
  m(P,Q)=n(P,Q).
\]
Thus Lemma~\ref{lem:comp-step} applies to the move
\[
  (P,Q)\xrightarrow{\,u\to z\,}(\widetilde P,\widetilde Q).
\]
Let $u',s',t',r'$ be as in Lemma~\ref{lem:comp-step}.

We first record a consequence of the alternatives in
Lemma~\ref{lem:comp-step}. In case \textup{(A)}, we have
\[
  t'\ne r'
  \qquad\text{and}\qquad
  t',r'\in N'\setminus S'.
\]
Since $N'=\{v',w'\}$, this forces
\[
  S'\cap N'=\emptyset,
\]
and hence $m(P,Q)=0$. Similarly, in case \textup{(B)}, we have
\[
  u'\ne s'
  \qquad\text{and}\qquad
  u',s'\in N'\cap S'.
\]
Since $N'=\{v',w'\}$, this forces
\[
  N'\subseteq S',
\]
and hence $m(P,Q)=2$.

Suppose that \[n(P,Q)=1.\]
Since $(P,Q)$ is strong, we have that $m(P,Q)=1$. Thus,  neither case
\textup{(A)} nor case \textup{(B)} of
Lemma~\ref{lem:comp-step} can occur.
Therefore,

\begin{quote}
\emph{ if $n(P,Q)=1$, then  case \textup{(S)}
occurs and $(\widetilde P,\widetilde Q)$ is strong.}
\end{quote}
We now record the old-new adjacency patterns that will be used
below. Recall that a Zero-configuration contains neither $x$ nor
$y$, while a Double-configuration contains both $x$ and $y$.
Consequently,
\begin{quote}\emph{
a Zero-configuration is never adjacent to a
Double-configuration.}
\end{quote}
The same observation applies on the primed
side.

Let
\[
  \widetilde S:=(S\setminus\{u\})\cup\{z\}.
\]
Suppose that the move is safe. Then neither $u$ nor $z$
belongs to $\{v,w\}$, so $S$ and $\widetilde S$ contain the same
vertices of $\{v,w\}$. Therefore the common neighbours of
$\widetilde P$ and $\widetilde Q$ are obtained from the common
neighbours of $P$ and $Q$ by applying the same move $u\to z$, with
the same type and the same label. In particular, every old common
neighbour has a new common neighbour adjacent to it, and every new
common neighbour has an old common neighbour adjacent to it.

Thus,
\begin{quote}
\emph{if the move is safe and $n(P,Q)=1$, then  the old Zero-configuration is adjacent to the new
Zero-configuration, and  the old Double-configuration is adjacent to
the new Double-configuration.}
\end{quote}

 Let
\[
  c\in S\cap\{v,w\}
  \qquad\text{and}\qquad
  \bar c\in\{v,w\}\setminus\{c\}.
\]
The common neighbours of $P$ and $Q$ are the
Zero-configuration
\[
  Z:=S\cup\{\bar c\}
\]
and the Double-configuration
\[
  D:=(S\setminus\{c\})\cup\{x,y\}.
\]

Suppose that the move is a departure, then $u=c$ and
$n(\widetilde P,\widetilde Q)=0$. Hence both common neighbours of
$\widetilde P$ and $\widetilde Q$ are Zero-configurations.
The old Zero-configuration $Z$ is adjacent to the Zero-configuration
\[
  (S\setminus\{c\})\cup\{z,\bar c\},
\]
since their symmetric difference is $\{c,z\}$ and $cz=uz\in E(G)$.
On the other hand, the old Double-configuration $D$ has no new
common neighbour adjacent to it, since every new common neighbour is
a Zero-configuration. Thus,
\begin{quote}
\emph{if the move is a departure and $n(P,Q)=1$, then the old
Zero-configuration is adjacent to a new Zero configuration, while
the old Double-configuration is not adjacent to a new common neighbour. }
\end{quote}

Suppose that the move is an arrival. We have that $z=\bar c$ and
$n(\widetilde P,\widetilde Q)=2$. Hence both common neighbours of
$\widetilde P$ and $\widetilde Q$ are Double-configurations.
Thus,
\begin{quote}
\emph{if the move is an arrival and  $n(P,Q)=1$, then  the old
Double-configuration is adjacent to a new Double-configuration,
while the old Zero-configuration is not adjacent to a new common neighbour.}
\end{quote}

We now prove \textup{(i)}, \textup{(ii)} and \textup{(iii)}.
We shall use the following observation throughout the proof:

\begin{quote} \emph{
since
$\varphi^\ast$ is an isomorphism and maps
$\CN(P,Q)$ onto $\CN(P',Q')$, and
$\CN(\widetilde P,\widetilde Q)$ onto
$\CN(\widetilde P',\widetilde Q')$, adjacency between old and new
common neighbours is preserved by $\varphi^\ast$.}
\end{quote}

\begin{itemize}
  \item[\textup{(i)}]
  Suppose that the move is safe.

  \begin{itemize}

  \item Suppose that $n(P,Q)=0$.

  We have that $m(P,Q)=0$ and
  \[
    S'\cap N'=\emptyset;
  \]
  thus, case \textup{(B)} of Lemma~\ref{lem:comp-step} cannot occur,
  because it would require two distinct vertices in $N'\cap S'$.

  Suppose that case \textup{(A)} occurs. Then
  \[
    t'\ne r'
    \qquad\text{and}\qquad
    t',r'\in N'\setminus S'.
  \]
  Since $N'=\{v',w'\}$, we have
  \[
    \{t',r'\}=\{v',w'\}.
  \]

  The old primed common neighbours are the Zero-configurations
  \[
    S'\cup\{v'\}
    \qquad\text{and}\qquad
    S'\cup\{w'\}.
  \]
  Note that
  \[
    W':=(S'\setminus\{u'\})\cup\{x',y'\}
  \]
  is a common neighbour of $\widetilde P'$ and $\widetilde Q'$.
  However, for every $e'\in\{v',w'\}$,
  \[
    W'\sd\bigl(S'\cup\{e'\}\bigr)
    =
    \{u',x',y',e'\}.
  \]
  Thus, $W'$ has no neighbour in $\CN(P',Q')$, contradicting the
  safe-move pattern. This implies that case \textup{(S)} occurs and $(\widetilde P,\widetilde Q)$ is
  strong; in particular $(a)$ of the definition of aligned pair holds for $(\widetilde P,\widetilde Q)$.

  The primed common part of $(\widetilde P,\widetilde Q)$ is equal to
  \[
    \widetilde S'=(S'\setminus\{u'\})\cup\{t'\}.
  \]
  We claim that
  \[
    t'\notin\{v',w'\}.
  \]
  Suppose otherwise. Then the new primed Double-configuration
  \[
    (\widetilde S'\setminus\{t'\})\cup\{x',y'\}
    =
    (S'\setminus\{u'\})\cup\{x',y'\}
  \]
  has no neighbour in $\CN(P',Q')$, by the same computation as
  above. This again contradicts the safe-move pattern. Hence
  $t'\notin\{v',w'\}$. Therefore,
  \[
    \widetilde S'\cap N'=\emptyset,
  \]
  and
  \[
    m(\widetilde P,\widetilde Q)=0=n(\widetilde P,\widetilde Q).
  \]
  thus, $(b)$ and $(c)$ in the definition of aligned pair hold for
  $(\widetilde P,\widetilde Q)$,

  \item   Suppose that $n(P,Q)=1$.

  Let
  \[
    d'\in S'\cap\{v',w'\},
    \qquad
    \bar d'\in\{v',w'\}\setminus\{d'\}.
  \]
  Since $m(P,Q)=1$, neither case \textup{(A)} nor case
  \textup{(B)} of Lemma~\ref{lem:comp-step} can occur, and so case
  \textup{(S)} occurs. Thus, $(\widetilde P,\widetilde Q)$ is
  strong and $(a)$ in the definition of aligned pair holds for  $(\widetilde P,\widetilde Q)$.

  The primed common part of $(\widetilde P,\widetilde Q)$ is equal to
  \[
    \widetilde S'=(S'\setminus\{u'\})\cup\{t'\}.
  \]

  We claim that
  \[
    u'\ne d'
    \qquad\text{and}\qquad
    t'\notin\{v',w'\}.
  \]

 Since $v'$ is not adjacent to $w'$, it cannot be the case that
 \[
    u'=d'
    \qquad\text{and}\qquad
    t'=\bar d'.
  \]

  Suppose that
  \[
    u'\ne d'
    \qquad\text{and}\qquad
    t'\in\{v',w'\}.
  \]
   Since $d'\in S'$ and $t'\notin S'$, the only way that
  $t'\in\{v',w'\}$ is if $t'=\bar d'$.

  Note that $m(\widetilde P,\widetilde Q)=2$. The old primed
  Zero-configuration is
  \[
    Z'=S'\cup\{\bar d'\}.
  \]
  The two new primed common neighbours are
  \[
    (S'\setminus\{u',d'\})\cup\{\bar d',x',y'\}
  \]
  and
  \[
    (S'\setminus\{u'\})\cup\{x',y'\}.
  \]
  Each has symmetric difference of size four with $Z'$. Hence $Z'$
  has no new primed neighbour; which contradicts the safe-move pattern.

  Suppose that
  \[
    u'=d'
    \qquad\text{and}\qquad
    t'\notin\{v',w'\}.
  \]
  Then $m(\widetilde P,\widetilde Q)=0$. The old primed
  Double-configuration is
  \[
    D'=(S'\setminus\{d'\})\cup\{x',y'\}.
  \]
  The two new primed common neighbours are
  \[
    S'\cup\{t'\}
  \]
  and
  \[
    (S'\setminus\{d'\})\cup\{t',\bar d'\}.
  \]
  Each has symmetric difference of size four with $D'$. Hence $D'$
  has no new primed neighbour; which contradicts the safe-move
  pattern.

  Therefore,
  \[
    u'\ne d'
    \qquad\text{and}\qquad
    t'\notin\{v',w'\}.
  \]
  Hence
  \[
    \widetilde S'\cap N'=\{d'\},
  \]
  and so
  \[
    m(\widetilde P,\widetilde Q)=1=n(\widetilde P,\widetilde Q),
  \]
and $(b)$ in the definition of aligned pair holds for $(\widetilde P,\widetilde Q)$.

 Since $(P,Q)$ is aligned, the old
  Zero-configuration maps to the old primed Zero-configuration
  \[
    Z'=S'\cup\{\bar d'\}.
  \]
  The new primed Zero-configuration is
  \[
    \widetilde Z'
      =
      \widetilde S'\cup\{\bar d'\}
      =
      (S'\setminus\{u'\})\cup\{t',\bar d'\}.
  \]
  Moreover,
  \[
    Z'\sd\widetilde Z'=\{u',t'\},
  \]
  and this is an edge of $G'$, since $\widetilde P'\sim P'$.
  Therefore $Z'$ is adjacent to $\widetilde Z'$. The new primed
  Double-configuration is not adjacent to $Z'$, since a
  Zero-configuration is not adjacent to a Double-configuration.

  On the unprimed side, the old Zero-configuration is adjacent to
  the new Zero-configuration and not adjacent to the new
  Double-configuration. Since $\varphi^\ast$ preserves adjacency and
  maps common neighbours to common neighbours, the new
  Zero-configuration must map to $\widetilde Z'$. Consequently, the
  new Double-configuration maps to the new primed
  Double-configuration. Therefore $(c)$ holds and $(\widetilde P,\widetilde Q)$ is
  aligned.

  \item Suppose that $n(P,Q)=2$.

 Since $(P,Q)$ is aligned, we have
  $m(P,Q)=2$, and hence
  \[
    N'\subseteq S'.
  \]
  Case \textup{(A)} of Lemma~\ref{lem:comp-step} cannot occur,
  because it would require two distinct vertices in $N'\setminus S'$.
  Suppose that case \textup{(B)} occurs. Then
  \[
    u'\ne s'
    \qquad\text{and}\qquad
    u',s'\in N'\cap S'.
  \]
  Since $N'=\{v',w'\}$, we have
  \[
    \{u',s'\}=\{v',w'\}.
  \]

  Thus the old primed common neighbours are
  \[
    (S'\setminus\{v'\})\cup\{x',y'\}
    \qquad\text{and}\qquad
    (S'\setminus\{w'\})\cup\{x',y'\}.
  \]

  In this case
  \[
    W':=S'\cup\{t'\}
  \]
  is a common neighbour of $\widetilde P'$ and $\widetilde Q'$.
  However, for every $e'\in\{v',w'\}$,
  \[
    W'\sd\bigl((S'\setminus\{e'\})\cup\{x',y'\}\bigr)
    =
    \{e',t',x',y'\}.
  \]
  Thus $W'$ has no neighbour in $\CN(P',Q')$, contradicting the
  safe-move pattern. This implies that case \textup{(S)} occurs.
   Thus, $(\widetilde P,\widetilde Q)$ is
  strong and $(a)$ in the definition of aligned pair holds for  $(\widetilde P,\widetilde Q)$.

  The primed common part of $(\widetilde P,\widetilde Q)$ is equal to
  \[
    \widetilde S'=(S'\setminus\{u'\})\cup\{t'\}.
  \]
  We claim that
  \[
    u'\notin\{v',w'\}.
  \]
  Suppose otherwise. Then the new primed Zero-configuration
  \[
    \widetilde S'\cup\{u'\}=S'\cup\{t'\}
  \]
  has no neighbour in $\CN(P',Q')$, by the same computation as
  above. This contradicts the safe-move pattern. Hence
  $u'\notin\{v',w'\}$. Since $N'\subseteq S'$ and
  $t'\notin S'$, we also have $t'\notin\{v',w'\}$. Therefore
  \[
    N'\subseteq \widetilde S',
  \]
  and so
  \[
    m(\widetilde P,\widetilde Q)=2=n(\widetilde P,\widetilde Q);
  \]
  thus, $(b)$ and $(c)$ in the definition of aligned pair hold for $(\widetilde P,\widetilde Q)$,
  and $(\widetilde P,\widetilde Q)$ is aligned.

\end{itemize}

  \item[\textup{(ii)}] Suppose that the move is a departure and $n(P,Q)=1$.

  We have that $n(\widetilde P,\widetilde Q)=0$.
 Since $(P,Q)$ is aligned and $n(P,Q)=1$, we have
\(
  m(P,Q)=1.
\)
If case \textup{(A)} of Lemma~\ref{lem:comp-step} occurred, then
\[
  t',r'\in N'\setminus S'
  \qquad\text{and}\qquad
  t'\ne r',
\]
forcing $S'\cap N'=\emptyset$, and hence $m(P,Q)=0$. If case
\textup{(B)} occurred, then
\[
  u',s'\in N'\cap S'
  \qquad\text{and}\qquad
  u'\ne s',
\]
forcing $N'\subseteq S'$, and hence $m(P,Q)=2$. Both are impossible.
Therefore, case \textup{(S)} of Lemma~\ref{lem:comp-step} occurs. In particular
$(\widetilde P,\widetilde Q)$ is strong and $(a)$ in the definition of aligned pair holds
for $(\widetilde P,\widetilde Q)$.

The primed common part of $(\widetilde P,\widetilde Q)$ is equal to
\[
    \widetilde S'=(S'\setminus\{u'\})\cup\{t'\}.
  \]

 Let
  \[
    c\in S\cap\{v,w\},
    \qquad
    \bar c\in\{v,w\}\setminus\{c\},
  \]
  and let
  \[
    d'\in S'\cap\{v',w'\},
    \qquad
    \bar d'\in\{v',w'\}\setminus\{d'\}.
  \]


  We claim that
  \[
    u'=d'
    \qquad\text{and}\qquad
    t'\notin\{v',w'\}.
  \]
Note that since $v'$ is not adjacent to $w'$ we cannot have that
  \[
    u'=d'
    \qquad\text{and}\qquad
    t' \in\{v',w'\}.
  \]

  Suppose  that
  \[
    u'\ne d'
    \qquad\text{and}\qquad
    t'\notin\{v',w'\}.
  \]
  Then $m(\widetilde P,\widetilde Q)=1$. The new primed
  Double-configuration is
  \[
    (\widetilde S'\setminus\{d'\})\cup\{x',y'\}.
  \]
  Its symmetric difference with the old primed Double-configuration
  \[
    D'=(S'\setminus\{d'\})\cup\{x',y'\}
  \]
  is $\{u',t'\}$, an edge of $G'$. Hence $D'$ has a new primed
  neighbour, contradicting the departure pattern.

  Suppose  that
  \[
    u'\ne d'
    \qquad\text{and}\qquad
    t'=\bar d'.
  \]
  Then $m(\widetilde P,\widetilde Q)=2$. The old primed
  Zero-configuration
  \[
    Z'=S'\cup\{\bar d'\}
  \]
  has no new primed neighbour, by similar computations; contradicting the departure pattern.

  Therefore,
  \[
    u'=d'
    \qquad\text{and}\qquad
    t'\notin\{v',w'\}.
  \]
  Hence
  \[
    \widetilde S'\cap N'=\emptyset,
  \]
  and so
  \[
    m(\widetilde P,\widetilde Q)=0=n(\widetilde P,\widetilde Q);
  \]
  thus, condition $(b)$ holds for $(\widetilde P,\widetilde Q)$.
  Since $(\widetilde P,\widetilde Q)$ is strong, and both common
  neighbours on each side are Zero-configurations, $(c)$ holds as well and
  $(\widetilde P,\widetilde Q)$ is aligned.

   \item[\textup{(iii)}] Suppose that the move is an arrival and $n(P,Q)=1$.

  We have that $n(\widetilde P,\widetilde Q)=2$.
  Since $(P,Q)$ is aligned and $n(P,Q)=1$, we have
  \(
    m(P,Q)=1.
  \)
  If case \textup{(A)} of Lemma~\ref{lem:comp-step} occurred, then
  \[
    t',r'\in N'\setminus S'
    \qquad\text{and}\qquad
    t'\ne r',
  \]
  forcing $S'\cap N'=\emptyset$, and hence $m(P,Q)=0$. If case
  \textup{(B)} occurred, then
  \[
    u',s'\in N'\cap S'
    \qquad\text{and}\qquad
    u'\ne s',
  \]
  forcing $N'\subseteq S'$, and hence $m(P,Q)=2$. Both are impossible.
  Therefore, case \textup{(S)} of Lemma~\ref{lem:comp-step} occurs. In particular
  $(\widetilde P,\widetilde Q)$ is strong and $(a)$ in the definition of aligned pair holds
  for $(\widetilde P,\widetilde Q)$.

  The primed common part of $(\widetilde P,\widetilde Q)$ is equal to
  \[
    \widetilde S'=(S'\setminus\{u'\})\cup\{t'\}.
  \]

  Let
  \[
    c\in S\cap\{v,w\},
    \qquad
    \bar c\in\{v,w\}\setminus\{c\},
  \]
  and let
  \[
    d'\in S'\cap\{v',w'\},
    \qquad
    \bar d'\in\{v',w'\}\setminus\{d'\}.
  \]

  We claim that
  \[
    u'\ne d'
    \qquad\text{and}\qquad
    t'=\bar d'.
  \]
  Note that since $v'$ is not adjacent to $w'$ we cannot have that
  \[
    u'=d'
    \qquad\text{and}\qquad
    t'\in\{v',w'\}.
  \]

  Suppose that
  \[
    u'\ne d'
    \qquad\text{and}\qquad
    t'\notin\{v',w'\}.
  \]
  Then $m(\widetilde P,\widetilde Q)=1$. The new primed
  Zero-configuration is
  \[
    \widetilde S'\cup\{\bar d'\}.
  \]
  Its symmetric difference with the old primed Zero-configuration
  \[
    Z'=S'\cup\{\bar d'\}
  \]
  is $\{u',t'\}$, an edge of $G'$. Hence $Z'$ has a new primed
  neighbour, contradicting the arrival pattern.

  Suppose that
  \[
    u'=d'
    \qquad\text{and}\qquad
    t'\notin\{v',w'\}.
  \]
  Then $m(\widetilde P,\widetilde Q)=0$. The old primed
  Double-configuration is
  \[
    D'=(S'\setminus\{d'\})\cup\{x',y'\}.
  \]
  The two new primed common neighbours are
  \[
    S'\cup\{t'\}
  \]
  and
  \[
    (S'\setminus\{d'\})\cup\{t',\bar d'\}.
  \]
  Each has symmetric difference of size four with $D'$. Hence $D'$
  has no new primed neighbour, contradicting the arrival pattern.

  Therefore,
  \[
    u'\ne d'
    \qquad\text{and}\qquad
    t'=\bar d'.
  \]
  Hence
  \[
    N'\subseteq\widetilde S',
  \]
  and so
  \[
    m(\widetilde P,\widetilde Q)=2=n(\widetilde P,\widetilde Q);
  \]
  thus, condition $(b)$ holds for $(\widetilde P,\widetilde Q)$.
  Since $(\widetilde P,\widetilde Q)$ is strong, and both common
  neighbours on each side are Double-configurations, $(c)$ holds as well and
  $(\widetilde P,\widetilde Q)$ is aligned.
\end{itemize}
\end{proof}

Since $x'$ and $y'$ are twins in $G'$ with $x'y'\notin E(G')$, we shall also
use a mirror image of the machinery just developed.

\begin{remark}\label{rem:comp-mirror}
By~\ref{m:CN} and~\ref{m:xy}, $x'$ and $y'$ are twins in $G'$ and
$x'y'\notin E(G')$; hence every hypothesis about the primed side used in
Lemmas~\ref{lem:comp-step} and~\ref{lem:comp-imm} is symmetric in $x'$ and
$y'$. Interchanging the names of $x'$ and $y'$ on the primed side therefore
leaves both lemmas valid and replaces the notion of a strong pair by the
following one: an $xy$-pair $(P,Q)$ is \emph{strong$^\ast$} if
$\varphi^\ast(P)\sd\varphi^\ast(Q)=\{x',y'\}$ with $y'\in\varphi^\ast(P)$ and
$x'\in\varphi^\ast(Q)$. Defining \emph{aligned$^\ast$} accordingly, the Step
Lemma then yields the mirrored weak invariant
$y'\in\widetilde P'\setminus\widetilde Q'$, and the Immunity Lemma preserves
aligned$^\ast$ under immune moves.

The pair $(A_2,A_6)$ is aligned$^\ast$: it is the $xy$-pair with
$S=X^\ast\cup\{w\}$, and its images $\varphi^\ast(A_2)=A_2'=X'\cup\{y',w'\}$
and $\varphi^\ast(A_6)=A_6'=X'\cup\{x',w'\}$ exhibit it as strong$^\ast$ with
$S'=X'\cup\{w'\}$ and $n=m=1$; moreover its common neighbours
$A_3=S\cup\{v\}$ and $A_1=(S\setminus\{w\})\cup\{x,y\}$ map to the
$Zero'$-configuration $A_3'=S'\cup\{v'\}$ and the $Double'$-configuration
$A_1'=(S'\setminus\{w'\})\cup\{x',y'\}$.
\end{remark}

We are finally ready to prove~\ref{m:comp} of Theorem~\ref{thm:G}.
Suppose for a contradiction that $v$ and $w$ lie in the same component $H$ of $G\setminus\{x,y\}$;
Let
\[\Gamma:=(v=u_1,u_2,\dots,u_l=w)\] be a shortest path from $v$ to $w$ in $G\setminus\{x,y\}$.

We claim that
\begin{quote}
 \emph{
 for one of the two ordered pairs
\[
((A_5,A_4),(A_2,A_6))
\quad\text{and}\quad
((A_2,A_6),(A_5,A_4)),
\]
there is a chain from the first pair to the second in which every move
except the last is immune, and the last move is vulnerable.
 }
\end{quote}
Let $a_1,\dots,a_r$ be the elements of \[(X^\ast \cup \{v\})\cap \{u_1,\dots,u_l\}\] in the order
as they appear in $\Gamma$; thus $a_1=v$.

Suppose that $a_r\neq u_{l-1}$. Consider the following moves. Move the token at $a_r$ along $\Gamma$
to $u_{l-1}$. For $j=r-1$ to $j=1$ move the token at $a_j$ along $\Gamma$ to $a_{j+1}$. All of these
moves are immune. Finally move the token at $u_{l-1}$ to $u_l=w$, this move is vulnerable.
If $u_2 \neq a_2$, with symmetric arguments we can find a chain from $(A_2,A_6)$ to $(A_5,A_4)$
in which all moves but the last one are immune; and the last move is vulnerable.

Suppose that $a_r=u_{l-1}$ and $u_2=a_2$. Move the token from $a_r=u_{l-1}$ to $w$. This is immune arrival
move since $n(A_5,A_4)=1$. For $i=r-1$ to $i=2$ move the token at $a_i$ along $\Gamma$ to $a_{i+1}$; all of these moves are immune. Finally, move the token at $v=a_1$ to $u_2$, this move is vulnerable.
Let $(P_1,Q_1),\dots,(P_{\ell+1},Q_{\ell+1})$  be such a chain.

Suppose that $(P_1,Q_1)=(A_5,A_4)$. As every move but  the last one is immune, each
member of the path up to (and including) $(P_{\ell},Q_{\ell})$ is aligned; in particular
$(P_{\ell},Q_{\ell})$; this contradicts the weak invariant of Lemma~\ref{lem:comp-step}, since
\[(P_{\ell+1},Q_{\ell+1})=(A_2,A_6), \text{ but } x' \notin A_2' \text{ and } y' \notin A_6'.\]
Suppose instead that $(P_1,Q_1)=(A_2,A_6)$, so that
$(P_{\ell+1},Q_{\ell+1})=(A_5,A_4)$. By Remark~\ref{rem:comp-mirror}, the pair
$(A_2,A_6)$ is aligned$^\ast$. As every move but the last is immune, the
mirrored Immunity Lemma shows that each member of the chain up to and
including $(P_\ell,Q_\ell)$ is aligned$^\ast$, and in particular strong$^\ast$.
Applying the mirrored weak invariant to the last move gives
$y'\in\varphi^\ast(P_{\ell+1})=\varphi^\ast(A_5)=A_5'=X'\cup\{x',v'\}$; but
$y'\notin X'$ and $y'\ne x',v'$, a contradiction.

We have shown that
\begin{quote}
\emph{
 $v,w$ lie in different components of $G\setminus \{x,y\}$.
 }
\end{quote}

By~\ref{sym:inv} of Lemma~\ref{lem:symmetries}, the inverse
$(\varphi^\ast)^{-1}$ realizes the standing configuration with the roles of
the primed and unprimed data interchanged. The argument above, proving that
$v$ and $w$ lie in different components of $G\setminus\{x,y\}$, uses only the
standing configuration together with~\ref{m:xy}--\ref{m:CN}, all of which are
preserved; applied to the hatted data it yields that
\begin{quote}
\emph{
 $v',w'$ lie in different components of $G'\setminus\{x',y'\}$.
 }
\end{quote}

Together, these two statements prove~\ref{m:comp}. With this new structure
we are able to strengthen Lemma~\ref{lem:comp-step}.

\begin{lemma}[Updated Step Lemma]\label{lem:comp-step-updated}
Suppose that $(P,Q)$ is a strong $xy$-pair and that
\[
  (P,Q)\xrightarrow{\,u\to z\,}(\widetilde P,\widetilde Q).
\]
Then there exists $u' \in P'\cap Q'$ and $t' \notin P' \cup Q'$, such that
\[
  \widetilde P'
    =(P'\setminus\{u'\})\cup\{t'\}
\textrm{ and }
  \widetilde Q'
    =(Q'\setminus\{u'\})\cup\{t'\}.
\]
In particular $(\widetilde P,\widetilde Q)$ is strong.
Moreover, the same hold with \emph{strong} replaced by \emph{strong$^\ast$}.
\end{lemma}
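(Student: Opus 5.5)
The plan is to reduce everything to the original Step Lemma (Lemma~\ref{lem:comp-step}) and use part~\ref{m:comp} of Theorem~\ref{thm:G}, now established, to rule out the two exceptional alternatives (A) and (B). Write $P'=S'\cup\{x'\}$ and $Q'=S'\cup\{y'\}$, and let $u',s'\in S'$ and $t',r'\notin S'\cup\{x',y'\}$ be as provided by Lemma~\ref{lem:comp-step}. The conclusion to be proved is exactly $u'=s'$ and $t'=r'$, that is, case (S). Note that $u'\in S'=P'\cap Q'$, and $t'\notin S'\cup\{x',y'\}=P'\cup Q'$. Once this holds, $\widetilde P'\sd\widetilde Q'=\{x',y'\}$ with $x'\in\widetilde P'$, so $(\widetilde P,\widetilde Q)$ is strong.

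If $xy\in E(G)$ or $|N|\ge 3$, part (i) of Lemma~\ref{lem:comp-step} already gives the conclusion. So I will assume the hard case~\eqref{eq:comp-rigid}. There $N=\{v,w\}$ and $N'=\{v',w'\}$ by~\eqref{eq:comp-rigidN}, and exactly one of (S), (A), (B) occurs. The key observation concerns which edges of $G'$ realise the moves. Since $\widetilde P'=(P'\setminus\{u'\})\cup\{t'\}$ is adjacent to $P'$, we have $u't'\in E(G')$. Likewise $\widetilde Q'=(Q'\setminus\{s'\})\cup\{r'\}$ is adjacent to $Q'$, so $s'r'\in E(G')$.

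In case (A), $u'=s'$, $t'\neq r'$ and $t',r'\in N'\setminus S'$, so $\{t',r'\}=\{v',w'\}$. Hence $u'$ is adjacent to both $v'$ and $w'$. Since $u'\in S'$, we have $u'\notin\{x',y'\}$, so $v'\,u'\,w'$ is a path in $G'\setminus\{x',y'\}$.

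In case (B), $t'=r'$, $u'\neq s'$ and $u',s'\in N'\cap S'$, so $\{u',s'\}=\{v',w'\}$. Hence $t'$ is adjacent to both $v'$ and $w'$. Since $t'\notin\{x',y'\}$, $v'\,t'\,w'$ is a path in $G'\setminus\{x',y'\}$.

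Both cases contradict part~\ref{m:comp} of Theorem~\ref{thm:G}, which puts $v'$ and $w'$ in different components of $G'\setminus\{x',y'\}$. So case (S) holds, which proves the first assertion.

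For the strong$^\ast$ version, I would invoke Remark~\ref{rem:comp-mirror}. All hypotheses used on the primed side are symmetric in $x'$ and $y'$: twinness, $x'y'\notin E(G')$ in the hard case, $N'=\{v',w'\}$, and the component separation of~\ref{m:comp}. The mirrored Step Lemma therefore gives the same trichotomy, and the same path argument excludes (A) and (B). The only step I expect to need care is checking that the neighbourhood edges $u't'$ and $s'r'$ really come from the adjacencies $P'\sim\widetilde P'$ and $Q'\sim\widetilde Q'$ with the stated indexing. Everything else is immediate.
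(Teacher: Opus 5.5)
Your proposal is correct and is essentially the paper's proof. You reduce to the Step Lemma, use part (i) outside the hard case, and in the hard case rule out alternatives (A) and (B) because $u'$ (respectively $t'$) would be a common neighbour of $v'$ and $w'$ outside $\{x',y'\}$, contradicting part (IV) of Theorem~\ref{thm:G}; the strong$^\ast$ version then follows from the $x'$--$y'$ mirror of Remark~\ref{rem:comp-mirror}, and your path $v'u'w'$ (or $v't'w'$) in $G'\setminus\{x',y'\}$ is just the paper's observation $N_{G'}(v')\cap N_{G'}(w')=\{x',y'\}$ phrased differently.
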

\begin{proof}
If $xy\in E(G)$ or $|N|\ge 3$, part~\textup{(i)} of the Step
Lemma~\ref{lem:comp-step} gives $u'=s'$ and $t'=r'$, which is the claim. Assume
then $xy\notin E(G)$ and $|N|=2$, so $N=\{v,w\}$ and $N'=\{v',w'\}$.
By~\ref{m:CN} and~\ref{m:xy}, $N_{G'}(x')=N_{G'}(y')=\{v',w'\}$; and
by~\ref{m:comp}, $v'$ and $w'$ lie in different components of
$G'\setminus\{x',y'\}$, so any common neighbour of $v'$ and $w'$ other than
$x',y'$ would lie in both of those components---impossible. Hence
\[
  N_{G'}(v')\cap N_{G'}(w')=\{x',y'\}.
\]
By the Step Lemma~\ref{lem:comp-step} exactly one of \textup{(S)}, \textup{(A)},
\textup{(B)} occurs, where $u',s'\in P'\cap Q'$, $t',r'\notin P'\cup Q'$, and the
moves $\widetilde P'\sim P'$, $\widetilde Q'\sim Q'$ read $u'\to t'$, $s'\to r'$
(so $u't',s'r'\in E(G')$). In case~\textup{(A)}, $u'=s'$ and
$\{t',r'\}=\{v',w'\}$, so $u'$ is adjacent to both $v'$ and $w'$ and thus
$u'\in\{x',y'\}$, contradicting $u'\in P'\cap Q'$. In case~\textup{(B)}, $t'=r'$
and $\{u',s'\}=\{v',w'\}$, so $t'$ is adjacent to both $v'$ and $w'$ and thus
$t'\in\{x',y'\}$, contradicting $t'\notin P'\cup Q'$. Hence case~\textup{(S)}
occurs: $u'=s'$ and $t'=r'$, giving
$\widetilde P'=(P'\setminus\{u'\})\cup\{t'\}$ and
$\widetilde Q'=(Q'\setminus\{u'\})\cup\{t'\}$. Since $x'\in\widetilde P'$ and
$y'\in\widetilde Q'$, the pair $(\widetilde P,\widetilde Q)$ is strong. The
statement for strong$^\ast$ follows by interchanging $x'$ and $y'$
(Remark~\ref{rem:comp-mirror}).
\end{proof}

\section{Untwisting}\label{sec:untwist}
We are ready to prove Theorem~\ref{thm:token}, \ref{thm:untwist_cycle} and~\ref{thm:twistfree}, which we restate for convenience

\tokencorr*

\begin{proof}
We prove
$\varphi^\ast(V_{x,1})=V_{x',1}'$ in detail; the other three equalities need only
small changes, indicated at the end.

Let $H_{x,1}$ be the subgraph of $F_{k^\ast}(G)$ induced by $V_{x,1}$. Any two
members of $V_{x,1}$ contain $x$ and have the same number of tokens in every
component of $G\setminus\{x,y\}$; moving a token within a component keeps a
configuration in $V_{x,1}$, and by the connectivity of the token graphs of the
(connected) components these moves join any two members. Hence $H_{x,1}$ is
connected. (Indeed $H_{x,1}\cong F_{k_1}(G_1)\,\square\cdots\square\,
F_{k_r}(G_r)$, with $k_i=|A_5\cap G_i|$.)

Let $B\in V_{x,1}$ and $\bar B:=(B\setminus\{x\})\cup\{y\}$; then $\bar B\in
V_{y,1}$, as $B$ and $\bar B$ have the same token counts in every component.
Choose a path $A_5=P_0,P_1,\dots,P_m=B$ in $H_{x,1}$ and set
$Q_i:=(P_i\setminus\{x\})\cup\{y\}$, so $Q_0=A_4$ and $Q_m=\bar B$. Each step
$P_i\to P_{i+1}$ moves a token other than $x$, hence a token of the common part
of the $xy$-pair $(P_i,Q_i)$; thus
\[
  (A_5,A_4)=(P_0,Q_0)\longrightarrow(P_1,Q_1)\longrightarrow\cdots
  \longrightarrow(P_m,Q_m)=(B,\bar B)
\]
is a chain of $xy$-pairs.

The pair $(A_5,A_4)$ is strong, since $\varphi^\ast(A_5)=A_5'=X'\cup\{x',v'\}$ and
$\varphi^\ast(A_4)=A_4'=X'\cup\{v',y'\}$. Write $S_i':=\varphi^\ast(P_i)\cap
\varphi^\ast(Q_i)$, so $S_0'=X'\cup\{v'\}$. By the Updated Step
Lemma~\ref{lem:comp-step-updated} every $(P_i,Q_i)$ is strong and
$S_{i+1}'=(S_i'\setminus\{u_i'\})\cup\{t_i'\}$ with $u_i',t_i'\notin\{x',y'\}$ and
$u_i't_i'\in E(G')$. Since $u_i',t_i'\ne x',y'$, the edge $u_i't_i'$ lies in
$G'\setminus\{x',y'\}$, so $u_i'$ and $t_i'$ belong to the \emph{same} component
of $G'\setminus\{x',y'\}$; hence replacing $u_i'$ by $t_i'$ leaves the token
count of every component of $G'\setminus\{x',y'\}$ unchanged. Therefore $S_m'$
has the same per-component profile as $S_0'$.
As $(B,\bar B)$ is strong, $\varphi^\ast(B)=S_m'\cup\{x'\}$; hence
$x'\in\varphi^\ast(B)$ and $|\varphi^\ast(B)\cap G_j'|=|S_m'\cap G_j'|=|S_0'\cap
G_j'|=|A_5'\cap G_j'|$ for every $j$. That is, $\varphi^\ast(B)\in V_{x',1}'$, so
$\varphi^\ast(V_{x,1})\subseteq V_{x',1}'$.

By~\ref{sym:inv} of Lemma~\ref{lem:symmetries} the same argument applies to
$(\varphi^\ast)^{-1}$ with the primed and unprimed data interchanged; its base
pair is $(A_5',A_4')$, and it gives $(\varphi^\ast)^{-1}(V_{x',1}')\subseteq
V_{x,1}$, i.e.\ $V_{x',1}'\subseteq\varphi^\ast(V_{x,1})$. Thus
$\varphi^\ast(V_{x,1})=V_{x',1}'$.

Reading the same chain from the $Q$-side gives $\varphi^\ast(V_{y,1})=V_{y',1}'$.
For the pair $V_{x,2},V_{y,2}$ the base pair is $(A_2,A_6)$, which is \emph{strong$^\ast$} (as
$\varphi^\ast(A_2)=A_2'=X'\cup\{y',w'\}$, so $y'\in\varphi^\ast(A_2)$); running
the strong$^\ast$ form of the Updated Step Lemma along paths in $H_{x,2}$ and
$H_{y,2}$ yields $\varphi^\ast(V_{x,2})=V_{y',2}'$ and
$\varphi^\ast(V_{y,2})=V_{x',2}'$.

Since $\varphi^\ast$ is $\varphi$ or $\varphi\circ\mathfrak{c}$ by the choice of
the standing configuration, these four equalities give case~$(a)$, respectively
case~$(b)$, of the statement.
\end{proof}


\untwistcycle*

\begin{proof}
By~\ref{m:comp}, $\{x,y\}$ is a $2$-cut of $G$ whose two vertices have the same
neighbours in $G\setminus\{x,y\}$. Put $\sigma_i:=|A_2\cap G_i|$, and let $\psi$
be the map on $V(F_{k^\ast}(G))$ that sends each configuration $A$ with
$|A\cap\{x,y\}|=1$ and $|A\cap G_i|=\sigma_i$ for all $i$ to the configuration
obtained by moving its token on $\{x,y\}$ to the other vertex of $\{x,y\}$, and
fixes every other configuration. Since $x$ and $y$ have the same neighbours in
$G\setminus\{x,y\}$, $\psi$ is an automorphism of $F_{k^\ast}(G)$~\cite{cut_aut},
so $\varphi^\ast\circ\psi$ is an isomorphism from $F_{k^\ast}(G)$ to $F_{k'}(G')$.

It is readily verified that $(A_1,A_2,A_3,A_4)$ is the $4$-cycle generated by the
disjoint edges $xv$ and $yw$, and that among these four configurations only $A_2$
has a single token on $\{x,y\}$ and profile $\sigma$ ($A_1$ has two such tokens,
$A_3$ has none, and $A_4$ carries $v$ in $G_1$ rather than $w$ in $G_2$, so its
profile differs since $v$ and $w$ lie in different components). Thus $\psi$ fixes
$A_1,A_3,A_4$ and sends $A_2$ to $A_6$, and therefore $\varphi^\ast\circ\psi$ maps
$(A_1,A_2,A_3,A_4)$ to
\[
  (A_1',A_6',A_3',A_4')=\bigl(X'\cup\{x',y'\},\ X'\cup\{x',w'\},\
  X'\cup\{v',w'\},\ X'\cup\{v',y'\}\bigr).
\]
The consecutive symmetric differences of these four sets are the edges
$y'w',\,x'v',\,y'w',\,x'v'$ of $C'$; hence the image is a $4$-cycle generated by
the disjoint edges $x'v'$ and $y'w'$.

It remains to show that $\varphi^\ast\circ\psi$ twists no disjoint-edge
$4$-cycle that $\varphi^\ast$ does not; since the complement preserves the type
of an induced $4$-cycle, the same then holds with $\varphi$ in place of
$\varphi^\ast$. Let $C$ be a disjoint-edge $4$-cycle twisted by
$\varphi^\ast\circ\psi$. By Theorem~\ref{thm:G} the two vertices of $C$ that
carry a cut token determine a $2$-cut $S$ of $G$ with the same neighbours, and
$C$ belongs to the profile-$t$ block of $S$ for a single profile $t$. By
Theorem~\ref{thm:token}, applied at $S$, an isomorphism twists $C$ precisely when
it carries that block to the crossed primed block; so it suffices to check that
$\varphi^\ast\circ\psi$ and $\varphi^\ast$ send the profile-$t$ block of $S$ to
the same primed block.
\begin{itemize}
\item If $S=\{x,y\}$ and $t=\sigma$, then $C$ lies in the block untwisted above,
  so $\varphi^\ast\circ\psi$ does not twist $C$, contrary to assumption; this
  case does not occur.
\item If $S=\{x,y\}$ and $t\ne\sigma$, then $\psi$ fixes every configuration of
  profile $t$, so $\varphi^\ast\circ\psi$ and $\varphi^\ast$ agree on this block.
\item If $S\ne\{x,y\}$, then $\psi$ alters a configuration of the block only by
  exchanging $x$ and $y$; as $\{x,y\}$ and $S$ are distinct $2$-cuts with the
  same neighbours, $x$ and $y$ lie in a single component of $G\setminus
  S$~\cite{cut_aut}, so the exchange fixes the token on $S$ and the number of
  tokens in each component of $G\setminus S$. Hence $\psi$ maps the profile-$t$
  block of $S$ onto itself, and $\varphi^\ast\circ\psi$ carries it to the
  same primed block as $\varphi^\ast$.
\end{itemize}
In the last two cases $\varphi^\ast\circ\psi$ and $\varphi^\ast$ twist $C$
together, so $\varphi^\ast$ twists $C$. This proves the second assertion.
Finally, $\varphi^\ast$ is $\varphi$ or $\varphi\circ\mathfrak{c}$ by the choice
of the standing configuration, so composing $\varphi$ with $\psi$ (together with
the complement in the latter case) gives the required isomorphism.
\end{proof}

Theorem~\ref{thm:twistfree} now follows from successive applications of Theorem~\ref{thm:untwist_cycle}.
\twistfree*
\qed

%
%

 \bibliographystyle{alpha} 
\bibliography{Automorphism}

 \end{document}